\newcommand{\proofpart}[2]{%
    \par
  \addvspace{\medskipamount}%
  \noindent\emph{Step #1: #2}\par\nobreak
  \addvspace{\smallskipamount}%
  \@afterheading
}
\DeclarePairedDelimiter\abs{\lvert}{\rvert}%
\DeclarePairedDelimiter\norm{\lVert}{\rVert}%
\let\oldabs\abs
\def\abs{\@ifstar{\oldabs}{\oldabs*}}
\let\oldnorm\norm
\def\norm{\@ifstar{\oldnorm}{\oldnorm*}}
\g@addto@macro\bfseries{\boldmath}
\newcommand{\T}{\mathbb{T}}
\newcommand{\Ka}{\mathcal{K}}
\newcommand{\conj}[1]{\overline{#1}}
\newcommand{\D}{\mathbb{D}}
\newcommand{\dist}[2]{\text{dist}( #1, #2 ) }
\renewcommand\Re{\operatorname{Re}}
\newcommand{\supp}[1]{\text{supp}({#1})}
\newtheorem{thm}{Theorem}[section]
\newtheorem{lemma}[thm]{Lemma}
\newtheorem{cor}[thm]{Corollary}
\newtheorem{prop}[thm]{Proposition}
\theoremstyle{definition}
\theoremstyle{definition}
\newcommand{\Addresses}{{
		\bigskip
		\footnotesize
		
		Adem Limani, \\ \textsc{Centre for Mathematical Sciences \\ Lund University \\
		Lund, Sweden}\\
		\texttt{adem.limani@math.lu.se}
		
			
	}}
\begin{document}
\title{\textbf{A generic threshold phenomena in weighted $\ell^2$}} 

\author{Adem Limani} 
\address{Centre for Mathematical Sciences, Lund University, Sweden}
\email{adem.limani@math.lu.se}

\date{\today}

\begin{abstract}
We consider threshold phenomenons in the context of weighted $\ell^2$-spaces. Our main result is a summable Baire category version of K\"orner's topological Ivashev-Musatov Theorem, which is proved to be optimal from several aspects.



\end{abstract}

\maketitle
\section{Introduction}\label{SEC:INTRO}
\subsection{Threshold phenomenons in weighted $\ell^2$}
Given a sequence of positive numbers $w=(w_n)_{n=0}^\infty$, we denote by $\ell^2(w)$ the Hilbert space of distributions $S$ on $\T$ satisfying 
\[
\norm{S}_{\ell^2(w)} = \left( \sum_{n} \abs{\widehat{S}(n)}^2 w_{|n|} \right)^{1/2}< \infty.
\]
It is straightforward to verify that $\ell^2(w)$ is separable, containing the trigonometric polynomials as a dense subset. If $w_n=1$ is constant, then we retain the the classical $\ell^2$-space, which by Parseval's Theorem can be identified with $L^2(\T,dm)$, the space of square integrable functions $f$ on the unit-circle $\T$:
\[
\sum_n \abs{\widehat{f}(n)}^2 = \int_{\T} \abs{f(\zeta)}^2 dm(\zeta) < \infty,
\]
where $dm$ denotes the arc-length measure on $\T$. Now it is of interest to exhibit function theoretical properties of elements belonging to $\ell^2(w)$. 
A natural question that arises is: \emph{when does $\ell^2(w)$ contain the continuous functions on $\T$?} The following observation clarifies that Parseval's theorem gives rise to the following threshold phenomena.

\begin{thm}\thlabel{THM:l2wcont} For $(\lambda_n)_n$ be positive numbers with $\lambda_n \uparrow +\infty$. Then there exists a continuous function $f$ on $\T$, such that
\[
\sum_n \, \abs{\widehat{f}(n)}^2 \lambda_{|n|} = \infty.
\]
\end{thm}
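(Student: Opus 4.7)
The plan is to invoke the Baire category theorem in the Banach space $C(\T)$ equipped with the supremum norm. For each positive integer $N$, set
\[
E_N = \Bigl\{ f \in C(\T) : \sum_n |\widehat{f}(n)|^2 \lambda_{|n|} \leq N \Bigr\}.
\]
It suffices to prove that each $E_N$ is closed with empty interior, for then $\bigcup_N E_N$ is meagre, hence a proper subset of $C(\T)$, and any $f$ in its complement furnishes the desired continuous function.

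Closedness is routine: if $(f_k)\subset E_N$ converges uniformly to $f$, then $\widehat{f_k}(n) \to \widehat{f}(n)$ for every $n$ (since uniform convergence dominates convergence of each individual Fourier coefficient), and Fatou's lemma applied with respect to counting measure weighted by $\lambda_{|n|}$ gives
\[
\sum_n |\widehat{f}(n)|^2 \lambda_{|n|} \leq \liminf_k \sum_n |\widehat{f_k}(n)|^2 \lambda_{|n|} \leq N,
\]
so $f \in E_N$.

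The crux is emptiness of the interior. Given $f \in E_N$ and $\epsilon > 0$, I would perturb $f$ by a single high-frequency character: set $g = f + (\epsilon/2) e^{in\theta}$, where the integer $n$ is yet to be chosen. Plainly $\|g-f\|_\infty = \epsilon/2$. Membership $f \in E_N$ forces $|\widehat{f}(n)|^2 \lambda_n \leq N$, hence $|\widehat{f}(n)| \leq (N/\lambda_n)^{1/2} \to 0$ as $n \to \infty$, by the hypothesis $\lambda_n \uparrow \infty$. Consequently
\[
|\widehat{g}(n)|^2 \lambda_n \geq \bigl( \epsilon/2 - (N/\lambda_n)^{1/2} \bigr)^2 \lambda_n \longrightarrow \infty,
\]
so for $n$ sufficiently large this single term already exceeds $N$, forcing $g \notin E_N$.

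The main subtlety is balancing two opposing scales: the character $e^{in\theta}$ has supremum norm $1$, yet its squared $\ell^2(w)$-norm equals $\lambda_n$, which by hypothesis blows up. This mismatch is precisely what makes the perturbation argument succeed and, more conceptually, what drives the genericity of the divergence phenomenon in this Baire-category framework.
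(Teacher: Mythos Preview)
Your proof is correct. The closedness argument via Fatou is standard, and the empty-interior step is clean: the key observation that $|\widehat{f}(n)|^2\lambda_n\le N$ for every $n$ whenever $f\in E_N$ is exactly what makes the single-frequency perturbation work.

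Your route, however, is genuinely different from the paper's. The paper gives an \emph{explicit construction}: it fixes a trigonometric polynomial $T_0$ with $\|T_0\|_{L^\infty}\le 1$ and $\|T_0\|_{L^2}\ge 1/2$, chooses a subsequence $(n_j)$ with $\sum_j \lambda_{n_j}^{-1/2}<\infty$ and gaps $n_{j+1}-n_j\ge 2\deg T_0$, and then sets
\[
f(\zeta)=\sum_j \lambda_{n_j}^{-1/2}\,\zeta^{n_j}T_0(\zeta).
\]
The Weierstrass $M$-test gives continuity, while the disjoint Fourier supports and the monotonicity of $\lambda_n$ force the weighted sum to diverge term-by-term. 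Your Baire-category argument trades this constructive content for a stronger qualitative conclusion: not only does such an $f$ exist, but the set of continuous functions with $\sum_n|\widehat f(n)|^2\lambda_{|n|}<\infty$ is meagre in $C(\T)$. This genericity is in the spirit of the paper's main results (Theorems~1.4 and~1.5), even though for this particular appendix statement the author opted for the direct construction.
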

This is observation is certainly expected, and likely well-known to experts. For the readers convenience, we outline a short proof in the Appendix. Likewise, we may ask: \emph{when does $\ell^2(w)$ consist only of nicely behaved elements?} Again, in view of Parseval's Theorem, if $w_n \downarrow 0$, one may expect the corresponding space $\ell^2(w)$ to contain singular measures wrt $dm$. In fact, the following result asserts something stronger.

\begin{thm}\thlabel{COR:l2wlac} Let $(w_n)_n$ positive numbers with $w_n \downarrow 0$. Then there exists a sequence of positive numbers $(a_n)_n$ satisfying 
\[
\sum_n a^2_n w_{|n|} < \infty,
\]
such that $(a_n)_n$ are not the Fourier coefficients of a complex finite Borel measure on $\T$.
\end{thm}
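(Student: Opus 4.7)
The plan is to reduce the statement to the classical observation that Fourier coefficients of finite complex Borel measures on $\T$ are uniformly bounded: if $\mu$ is such a measure then $\abs{\widehat{\mu}(n)} \leq \norm{\mu}_M$ for every $n \in \mathbb{Z}$. Once this is in hand, it suffices to construct a sequence of positive numbers $(a_n)_n$ with $\sum_n a_n^2 w_{|n|} < \infty$ that fails to be bounded; such a sequence cannot possibly coincide with $(\widehat{\mu}(n))_n$ for any $\mu \in M(\T)$.

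To build such an $(a_n)_n$, I would exploit the hypothesis $w_n \downarrow 0$ to extract, inductively, a strictly increasing subsequence of positive integers $(n_k)_k$ along which $w_{n_k}$ decays faster than any prescribed rate, say $w_{n_k} \leq 2^{-k}/k^2$. The idea is to concentrate mass at these indices by setting $a_{\pm n_k} = k$, which produces an unbounded sequence but contributes only a summable amount to the weighted $\ell^2$-norm, namely on the order of $\sum_k k^2 \cdot 2^{-k}/k^2 < \infty$.

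At the remaining indices there is essentially no constraint: one merely needs to keep $(a_n)_n$ strictly positive while preserving the finiteness of $\sum_n a_n^2 w_{|n|}$. Since $w_n$ is bounded, any summable positive choice such as $a_n = 2^{-|n|}$ does the job and costs at most a bounded multiple of $\sum_n 4^{-|n|} < \infty$. Combining the two contributions yields the desired sequence.

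The whole argument is essentially a soft one; the only conceptual content is noticing that decay of $w$ allows $\ell^2(w)$ to swallow unbounded sequences, whereas the Fourier coefficient sequence of any finite complex Borel measure lies in $\ell^\infty$. There is no genuine obstacle here — the delicate part would have been to rule out \emph{positive} measures with a finer constraint, but since the theorem only needs a finite complex Borel measure, the trivial boundedness $\abs{\widehat{\mu}(n)} \leq \norm{\mu}_M$ already suffices.
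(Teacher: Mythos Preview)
Your argument is correct. The construction of an unbounded positive sequence lying in $\ell^2(w)$, combined with the trivial bound $\abs{\widehat{\mu}(n)}\leq\norm{\mu}_{M}$ for any finite complex Borel measure $\mu$, settles the theorem directly.

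This is, however, a genuinely different route from the paper's. The paper deduces \thref{COR:l2wlac} from \thref{THM:l2wcont} by duality: assuming the conclusion fails, one obtains a bounded inclusion $\ell^2(w)\hookrightarrow M(\T)$, which by passing to pre-duals forces $C(\T)\subset \ell^2(1/w)$; but $1/w_n\uparrow\infty$, so \thref{THM:l2wcont} furnishes a continuous function outside $\ell^2(1/w)$, a contradiction. Your approach is more elementary and self-contained --- it needs nothing beyond the boundedness of Fourier coefficients of measures --- whereas the paper's argument, though less explicit, has the conceptual merit of exhibiting \thref{THM:l2wcont} and \thref{COR:l2wlac} as two faces of the same threshold phenomenon. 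Either proof is perfectly adequate for the purpose at hand.
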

It is well-known that \thref{COR:l2wlac} is essentially a dual reformulation of \thref{THM:l2wcont}. Our main purpose in this note, is to find the precise threshold for when $\ell^2(w)$ always consists of continuous functions in $\T$, thus complementing the observations phrased above.





\subsection{Topologically bad support}
Our main result is a weighted $\ell^2$-analogue of K\"orner's topological Ivashev-Musatov Theorem in \cite{korner2003topological}.

\begin{thm}\thlabel{THM:BADSUPP} Let $(\lambda_n)_n$ be positive numbers satisfying the following hypothesis:
\begin{enumerate}
    \item[(i)] $\sum_n \frac{1}{\lambda_n} = +\infty$,
    \item[(ii)] there exists $C>1$, such that for all $n\geq 1$:
    \[
    C^{-1} \lambda_n \leq \lambda_k \leq C \lambda_n, \qquad n\leq k \leq 2n.
    \]
\end{enumerate}
Then there exists a positive function $f\in L^\infty(\T,dm)$ such that $\supp{fdm}$ has empty interior, but 
\[
\sum_n \abs{\widehat{f}(n)}^2 \lambda_{|n|} < \infty.
\]
\end{thm}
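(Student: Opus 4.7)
The plan is to realize $f$ as the limit of an iterative construction inside a complete metric space. Consider
\[
\mathcal{F} = \left\{ h \in L^\infty(\T) : 0 \leq h \leq 1,\ \widehat{h}(0) \geq 1/4,\ \sum_{n} \abs{\widehat{h}(n)}^2 \lambda_{\abs{n}} \leq M \right\}
\]
for $M$ fixed and sufficiently large (at least $\lambda_0$), equipped with the metric $d(h_1,h_2) = \norm{h_1-h_2}_{\ell^2(\lambda)}$. Standard arguments (Fatou and subsequential a.e.\ convergence) show that $\mathcal{F}$ is closed in $\ell^2(\lambda)$, hence a complete metric space, and that it contains the constant $1/2$.

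Enumerate the open intervals of $\T$ with rational endpoints as $\{I_k\}_{k=1}^{\infty}$. Starting from $f_0 \equiv 1/2$, I would inductively produce $f_k \in \mathcal{F}$ and open subintervals $J_k \subset I_k$, subject to a summable budget on $\abs{J_k}$ (so that the cumulative mass loss leaves $\widehat{f_k}(0) \geq 1/4$), such that (a) $f_k = 0$ a.e.\ on $J_1 \cup \cdots \cup J_k$ and (b) $d(f_k, f_{k-1}) < 2^{-k}$. Then $(f_k)$ is Cauchy in $\ell^2(\lambda)$, so $f = \lim f_k \in \mathcal{F}$, and $f = 0$ a.e.\ on $\bigcup_k J_k$, a dense open subset of $\T$ (since each $J_k \subset I_k$ and $\{I_k\}$ is a base). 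Consequently, $\supp{f\,dm}$ has empty interior, and $\widehat{f}(0) \geq 1/4$ guarantees $f \not\equiv 0$.

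The inductive step reduces to the following \emph{Approximation Lemma}, which is where the real work lies: given $f \in \mathcal{F}$ vanishing a.e.\ on a closed set $E$, an open interval $I$ with $\overline{I}\cap E = \emptyset$, and $\epsilon > 0$, there exist $g \in \mathcal{F}$ and an open $J \subset I$ of arbitrarily small length such that $g = 0$ a.e.\ on $E \cup J$ and $d(f,g) < \epsilon$. I would attempt this by seeking $g = f(1-\phi)$, where $\phi$ is a suitable nonnegative trigonometric polynomial with $\phi \equiv 1$ on $J$, $0 \le \phi \le 1$, and $\supp{\phi} \subset I$ (so $\phi$ vanishes on $E$). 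Then $g \in L^\infty(\T)$ with $0 \leq g \leq f \leq 1$ and $g = 0$ on $E \cup J$; the task reduces to bounding $\norm{f\phi}_{\ell^2(\lambda)} < \epsilon$.

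The hard part will be this last estimate: crude bounds via Plancherel on the support of $\phi$ fail at the dyadic threshold $\abs{n} \sim 1/\abs{J}$, and one must exploit both hypotheses on $(\lambda_n)_n$. The doubling condition permits uniform dyadic comparisons of $\lambda_n$ across scales, reducing the analysis of the convolution $\widehat{f} \ast \widehat{\phi}$ to a dyadic-block estimate. The divergence $\sum_n 1/\lambda_n = +\infty$, in turn, is exactly the condition that makes the problem possible: indeed, if $\sum_n 1/\lambda_n < \infty$, then by Cauchy-Schwarz every $h \in \ell^2(\lambda)$ has an absolutely convergent Fourier series and is therefore continuous on $\T$, forcing any nonnegative $h$ with nowhere dense support to be identically zero. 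Thus this condition is sharp, and an appropriate multi-scale construction of $\phi$ adapting K\"orner's topological Ivashev-Musatov scheme \cite{korner2003topological} to the weighted $\ell^2$-setting is what the proof will hinge on.
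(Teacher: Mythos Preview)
Your framework—iteratively punching holes to build a Cauchy sequence in a complete subset of $\ell^2(\lambda)$—is equivalent to the paper's Baire-category argument on pairs $(f,E)$, and your Approximation Lemma is precisely the density step in the paper's Proposition~2.5. You also correctly identify that a K\"orner-type multiscale multiplier is the crux; this is exactly the content of the paper's Lemma~2.2.

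There is, however, a misstep in how you set up the lemma. You impose $\supp{\phi}\subset I$ ``so $\phi$ vanishes on $E$,'' but this is unnecessary: since $g=f(1-\phi)$ and $f$ already vanishes on $E$, so does $g$, irrespective of where $\phi$ lives. Worse, the constraint is an obstruction. A nonzero trigonometric polynomial cannot be supported in a proper arc, and even passing to smooth $\phi$, a standard bump of width $\delta$ has $\|\phi\|^2_{\ell^2(\lambda)}\asymp 1$ when $\lambda_n=n$ (the $\dot H^{1/2}$ seminorm is scale-invariant), so naive localizers cannot make $\|f\phi\|_{\ell^2(\lambda)}$ small at the critical endpoint your theorem must cover. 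The paper's multiplier $\psi_\varepsilon$ is \emph{not} compactly supported: it vanishes on a short arc near the target point but is close to $1$ on all of $\T$, with $\widehat{\psi_\varepsilon}(0)=1$ and $\sum_{n\ne 0}|\widehat{\psi_\varepsilon}(n)|^2\lambda_{|n|}\le\varepsilon^2$, built as a weighted average of K\"orner's functions $g_{1/j,S,M}$ over the full range $1/\varepsilon\le j\le S(\varepsilon)$ (this averaging is where condition~(i) enters). If you drop the support constraint and take $1-\phi=\psi_\varepsilon(\cdot\,\overline a)$, your scheme goes through verbatim; the estimate $\|f-f\psi_\varepsilon\|_{\ell^2(\lambda)}\to 0$ for smooth $f$ is then exactly the computation in the paper's Proposition~2.5.
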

Our theorem is essentially sharp, since if the hypothesis in $(i)$ is violated, then Cauchy-Schwartz inequality implies
\[
\sum_n \abs{\widehat{f}(n)} \leq \left( \sum_n \abs{\widehat{f}(n)}^2 \lambda_n \right)^{1/2}\left( \sum_n \frac{1}{\lambda_n} \right)^{1/2} < \infty,
\]
hence any element in $\ell^2(\lambda)$ belongs to the so-called Wiener algebra, and are thus continuous on $\T$. Primary examples of space for which our \thref{THM:BADSUPP} applies to are Dirichlet-type space of elements $f\in L^2(\T, dm)$ with
\[
\sum_n \abs{\widehat{f}(n)}^2 (1+|n|)^{\gamma} < \infty, \qquad 0<\gamma \leq 1.
\]
In this classical framework, it is a well-known fact that one can exhibit elements which are not continuous, while if $\gamma>1$, classical Sobolev embeddings ensure containment into H\"older-type spaces. For instance, see \cite{evans2022partial}. At the end of Section 2, we shall explain why \thref{THM:BADSUPP} cannot simply be derived from methods involving sparse Fourier support, such as Sidon sets. For instance, this includes Riesz-type products and lacunary series.

A natural follow-up question is whether the regularity hypothesis $(ii)$ is truly necessary, or if it can be replaced by a more natural monotonicity condition. Our next observation clarifies that this is not the case, as condition $(ii)$ cannot simply be omitted.

\begin{thm}\thlabel{THM:(ii)} There exists a sequence of positive, increasing numbers $(\lambda_n)_n$ with $\lambda_n \uparrow +\infty$ and $\sum_n \frac{1}{\lambda_n} = + \infty$, such that whenever $S$ is a distribution in $\T$ with 
\[
\sum_n \abs{\widehat{S}(n)}^2 \lambda_{|n|}< \infty,
\]
then $\supp{S}= \T$.
\end{thm}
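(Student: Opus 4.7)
The plan is to build a piecewise-constant weight $(\lambda_n)$ whose super-exponential jumps between long plateaus force every member of $\ell^2(\lambda)$ to have full support. Concretely, fix $c > 0$ and a tower-growing sequence $N_1 < N_2 < \cdots$ with $N_{k+1} = \lceil e^{cN_k}\rceil$, and set $\lambda_n = \lceil e^{cN_k}\rceil$ for $n \in [N_k, N_{k+1})$ (after an innocuous adjustment to make it strictly increasing). Then $\lambda_n \uparrow +\infty$, and a direct computation gives
\[
\sum_n \frac{1}{\lambda_n} = \sum_k \frac{N_{k+1}-N_k}{\lambda_{N_k}} \gtrsim \sum_k 1 = +\infty,
\]
so hypothesis (i) of \thref{THM:BADSUPP} holds, while the super-exponential ratios $\lambda_{N_{k+1}}/\lambda_{N_{k+1}-1}$ spectacularly violate (ii).

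Given a distribution $S \in \ell^2(\lambda)$ with $\supp{S} \subsetneq \T$, I first observe that $\lambda_n \geq \lambda_0 > 0$ forces $S \in L^2(\T)$, and the support condition produces an open arc $I$ with $S|_I = 0$; write $K = \T \setminus I$, a proper closed arc. The strategy is to reduce to the smooth case via convolution: pick a Gevrey-smooth approximate identity $(\chi_\epsilon)_{\epsilon > 0}$ supported in $(-\epsilon, \epsilon)$ with $\|\chi_\epsilon\|_{L^1(\T)} = 1$. Then $|\widehat{\chi_\epsilon}(n)| \leq 1$ yields $S \ast \chi_\epsilon \in \ell^2(\lambda)$, while $S \ast \chi_\epsilon$ is Gevrey-smooth with support in the slightly enlarged closed arc $K + [-\epsilon, \epsilon]$, still proper for small $\epsilon$. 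Since $S \ast \chi_\epsilon \to S$ in $L^2$ as $\epsilon \to 0^+$, it suffices to establish the following rigidity: no nonzero Gevrey-smooth function supported on a proper closed arc can lie in $\ell^2(\lambda)$.

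For this, I would invoke a Paley--Wiener type principle: a nonzero $\phi \in L^2(\T)$ supported on a proper closed arc cannot extend analytically to any annulus around $\T$, so its Fourier coefficients cannot decay at any geometric rate. Quantitatively, $\limsup_n |\widehat{\phi}(n)|^{1/|n|} = 1$, and hence for every $\delta > 0$ there are infinitely many $n$ with $|\widehat{\phi}(n)| \geq e^{-\delta|n|}$. For such $n$ lying near the left endpoint of a block, the contribution
\[
\lambda_n|\widehat{\phi}(n)|^2 \;\geq\; e^{cN_k}\, e^{-2\delta n} \;\geq\; e^{(c-2\delta)N_k}
\]
is unbounded in $k$ for any $\delta < c/2$; a Bernstein--Plancherel--P\'olya type density estimate for entire functions of exponential type would then produce infinitely many such ``good'' indices near block boundaries, forcing $\|\phi\|_{\ell^2(\lambda)}^2 = +\infty$ and delivering the desired contradiction.

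The main obstacle is this last quantitative step: the bare Paley--Wiener principle gives only a sparse subsequence of indices along which $|\widehat{\phi}(n)|$ fails to decay exponentially, and a priori these could all land near the right endpoints of the blocks where $\lambda_n$ is small relative to $e^{2\delta n}$. Bridging the gap requires either a sharpening of the block geometry so that the tower base $c$ exceeds twice the Paley--Wiener type (uniformly across arcs), or a density lemma in the Paley--Wiener class guaranteeing $|\widehat{\phi}(n)| \geq e^{-\delta|n|}$ on a positive density of integers, or, as an alternative detour, an application of Zygmund's theorem on lacunary Fourier series after engineering the low-weight indices to form a lacunary set, so that $\ell^2(\lambda)$-membership essentially forces Fourier support on a lacunary set, making compatibility with proper-arc support impossible.
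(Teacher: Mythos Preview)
Your construction and reduction are reasonable, but the gap you flag is genuine and not easily bridged. The paper takes a completely different route that avoids the Paley--Wiener obstacle altogether.

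The paper does not fix $(\lambda_n)$ in advance and then try to verify rigidity. Instead it builds $\lambda$ \emph{adaptively} from the K\"orner--Meyer lemma: given $N,\gamma,\delta$, there is an $\varepsilon=\varepsilon(N,\gamma,\delta)>0$ such that any distribution $S$ with $\sum_{|n|\le N}|\widehat S(n)|^2\ge\gamma$ and $\sup_{|n|>N}|\widehat S(n)|\le\varepsilon$ satisfies $\sup_{\zeta}\dist{\zeta}{\supp S}\le\delta$. One chooses $N_1<N_2<\cdots$ and $\varepsilon_1>\varepsilon_2>\cdots$ inductively (with $\gamma=2^{-k}$, $\delta=2^{-k}$) and sets $\lambda_{N_k}=2^{2k}/\varepsilon_k^2$, interpolating $1/\lambda_n$ linearly. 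For any nonzero $S\in\ell^2(\lambda)$ one has the crude bound $\sup_{|n|>N_{k+1}}|\widehat S(n)|\le\lambda_{N_{k+1}}^{-1/2}\|S\|_{\ell^2(\lambda)}\le\varepsilon_{k+1}$ for large $k$, while the low-frequency $L^2$ mass eventually exceeds $2^{-k}$; the lemma then forces $\supp S$ to be $2^{-k}$-dense for all large $k$, hence equal to $\T$. Divergence of $\sum_n 1/\lambda_n$ (or $\sum_n\Phi(1/\lambda_n)$ in the stronger form) is arranged independently by making $N_{k+1}-N_k$ large.

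The structural difference is that the paper uses only \emph{upper} bounds on high-frequency coefficients, which come for free from $\ell^2(\lambda)$-membership, whereas your route demands \emph{lower} bounds on $|\widehat\phi(n)|$ at prescribed locations. Your density hope does not hold in the form you need: the set $\bigcup_k[N_k,(c/2\delta)N_k]$ has integer density zero because $N_{k+1}\sim e^{cN_k}$, and for entire functions of exponential type there is no obstruction to being sub-exponentially small on a zero-density set of integers, so nothing forces your ``good'' indices to land there. The lacunary alternative does not rescue the argument either: forcing the low-weight indices to be lacunary would only pin Fourier support to a lacunary set if the weight were infinite off that set, which is incompatible with $\sum_n 1/\lambda_n=\infty$.

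In short, the proposal is not a proof as it stands, and the obstacle is structural rather than technical; the K\"orner--Meyer approach sidesteps it by never asking where individual large coefficients lie.
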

This results is essentially similar to to K\"orner's Theorem 1.2 in \cite{korner1977theorem2}, and its proof will be inspired from it. It remains unclear what a "critical" version of \thref{THM:BADSUPP} would entail, which alligns well with other uncertainty principles in harmonic analysis, such as the Ivashev-Musatov Theorem and the Beurling-Malliavin Multiplier Theorem. For further discussions on related topics, see \cite{korner1986theorem3} and \cite{havinbook}.


\subsection{A Baire category version}
It turns out that the elements appearing in the statement of \thref{THM:BADSUPP} are generic. To this end, let $\mathscr{C}$ denote the collection of non-empty compact subsets of $\T$ equipped with the so-called \emph{Hausdorff metric} 
\[
d_{\mathscr{C}}(E,K) = \sup_{\zeta \in E} \dist{\zeta}{K} + \sup_{\xi\in K} \dist{\xi}{E}.
\]
It is straightforward to verify that $(\mathscr{C}, d_{\mathscr{C}})$ is a complete metric space. We can actually offer the following Baire category version of \thref{THM:BADSUPP}.

\begin{thm}\thlabel{THM:BAIRETOP} Let $(\lambda_n)_n$ be positive numbers satisfying the hypothesis $(i)-(ii)$ of \thref{THM:BADSUPP}. Consider the collection $\mathcal{L}_{\mathscr{C}}(\lambda) \subset L^\infty(\T,dm) \times \mathscr{C}$ of ordered pairs $(f,E)$ satisfying the properties:
\begin{enumerate}
    \item[(i)] $\supp{\mu}\subseteq E$,
    \item[(ii)] $\{\widehat{f}(n) \}_n \in \ell^2(\lambda)$,
\end{enumerate}
equipped with the metric 
\[
d_\lambda \left( (f,E), (g,K) \right) := d_{\mathscr{C}}(E,K) + \norm{f-g}_{\ell^2(\lambda)}.
\]
Then the sub-collection of pairs $(f,E) \in \mathcal{L}_{\mathscr{C}}(\lambda)$ with $E$ having no interior is generic in the complete metric space $\left(\mathcal{L}_{\mathscr{C}}(\lambda), d_\lambda \right)$.
\end{thm}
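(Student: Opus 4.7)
The plan is to apply Baire's theorem to the complete metric space $(\mathcal{L}_{\mathscr{C}}(\lambda), d_\lambda)$, writing the target set $\mathcal{G}$ of pairs $(f,E)$ with $E$ of empty interior as a dense $G_\delta$. Completeness is routine: a Cauchy sequence $(f_k, E_k)$ yields a Hausdorff limit $E$ and an $\ell^2(\lambda)$-limit $f$, and the support inclusion $\supp{f_k\,dm} \subseteq E_k$ passes to the limit by combining Hausdorff and $\ell^2(\lambda)$-convergence. For the $G_\delta$-structure, fix a countable base $\{I_j\}_j$ of open arcs of $\T$ with rational endpoints; a compact $E \subseteq \T$ has nonempty interior iff $\overline{I_j} \subseteq E$ for some $j$. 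Setting $\mathcal{A}_j := \{(f,E) \in \mathcal{L}_{\mathscr{C}}(\lambda) : \overline{I_j} \subseteq E\}$, each $\mathcal{A}_j$ is $d_\lambda$-closed (stability of inclusion under Hausdorff convergence), so $\mathcal{G}^c = \bigcup_j \mathcal{A}_j$ is $F_\sigma$.

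The heart of the argument is showing each $\mathcal{A}_j$ is nowhere dense. Given $(f_0, E_0) \in \mathcal{L}_{\mathscr{C}}(\lambda)$ and $\epsilon > 0$, pick $x_0 \in I_j$ and $0 < \delta < \epsilon/2$ with $B(x_0, 2\delta) \subseteq I_j$, and set $E := E_0 \setminus B(x_0, \delta)$, so that $d_{\mathscr{C}}(E, E_0) \leq \delta$ and $x_0 \in \overline{I_j} \setminus E$. What remains is to produce $f \in L^\infty \cap \ell^2(\lambda)$ with $\supp{f\,dm} \subseteq E$ and $\|f - f_0\|_{\ell^2(\lambda)} < \epsilon/2$. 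The construction I would use rests on a localized version of \thref{THM:BADSUPP} applied to the nonempty open set $W := \text{int}(E_0) \setminus \overline{B(x_0, \delta)}$, producing a positive $G \in L^\infty$ with $\supp{G\,dm} \subseteq \overline{W}$ of empty interior, $\widehat{G} \in \ell^2(\lambda)$, and $\widehat{G}(n) \neq 0$ for every $n \in \mathbb{Z}$ (after a small generic perturbation if needed). The Riemann-sum identity $\widehat{G * \mu_N}(n) = \widehat{G}(n) \cdot \mathbf{1}_{N \mid n}$ for the equispaced measure $\mu_N = \frac{1}{N} \sum_{k=1}^N \delta_{2\pi k/N}$ shows $G * \mu_N \to \widehat{G}(0) \cdot \mathbf{1}_\T$ in $\ell^2(\lambda)$ by tail summability of $\sum |\widehat{G}(kN)|^2 \lambda_{|kN|}$; shifted analogues approximate each character $e^{im\theta}$, and by linearity and the density of trigonometric polynomials in $\ell^2(\lambda)$, one obtains finite sums $\sum_k c_k G_{\tau_k}$ approximating $f_0$. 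Each such sum lies in $L^\infty \cap \ell^2(\lambda)$ with support a finite union of translates of $\supp{G\,dm}$, which is closed with empty interior.

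The principal technical obstacle lies in reconciling the $\ell^2(\lambda)$-approximation with the support constraint $\supp{f\,dm} \subseteq E$: the equispaced translates above spread the support across all of $\T$, violating the required containment inside $E_0 \setminus B(x_0, \delta)$. To overcome this, one must refine the approximation scheme by constraining the sample points $\tau_k$ to a small neighborhood of $E_0 \setminus B(x_0, \delta)$, and control the resulting Paley--Wiener-style constraint on achievable Fourier sequences using the doubling hypothesis (ii) to absorb the weighted errors. Equivalently, one may rerun the iterative construction underlying \thref{THM:BADSUPP} with the target $f_0$ prescribed from the outset, at each stage removing a small open subset of the current support while maintaining proximity to $f_0$ in $\ell^2(\lambda)$ via the doubling control on the weight. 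Either route yields the desired $f$, and together with the choice $E := E_0 \setminus B(x_0, \delta)$ gives $(f, E) \notin \mathcal{A}_j$ within $\epsilon$ of $(f_0, E_0)$; Baire's theorem then delivers the genericity of $\mathcal{G}$.
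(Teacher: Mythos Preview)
Your framework (completeness, the $G_\delta$-description via the closed sets $\mathcal{A}_j$, and reducing density to perturbing a given pair off of $\mathcal{A}_j$) matches the paper's. The genuine gap is precisely where you flag the ``principal technical obstacle'': producing $f$ close to $f_0$ in $\ell^2(\lambda)$ with $\supp{f\,dm}\subseteq E_0\setminus B(x_0,\delta)$. Your convolution-with-translates scheme indeed destroys the support constraint, and neither of your proposed repairs is viable as stated. Restricting the sample points $\tau_k$ to a small neighborhood of $E$ means the functions $G_{\tau_k}$ span only a proper closed subspace of $\ell^2(\lambda)$; there is no reason the given $f_0$ lies near that subspace, and invoking ``Paley--Wiener-style constraints'' does not supply the missing approximation. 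Your second route is circular: in this paper \thref{THM:BADSUPP} is deduced \emph{from} \thref{THM:BAIRETOP}, so there is no independent ``iterative construction underlying \thref{THM:BADSUPP}'' to rerun.

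The missing idea, which is the core of the paper's proof, is to \emph{multiply} rather than convolve. One constructs a smooth cutoff $\psi_\varepsilon\in C^\infty(\T)$ with $0\le\psi_\varepsilon\le 1+\varepsilon$, $\psi_\varepsilon\equiv 0$ on a shrinking neighborhood of the target point, $\widehat{\psi_\varepsilon}(0)=1$, and crucially $\sum_{n\neq 0}|\widehat{\psi_\varepsilon}(n)|^2\lambda_{|n|}\le\varepsilon^2$ together with pointwise decay $|\widehat{\psi_\varepsilon}(n)|\lesssim_N \varepsilon^{1-N}|n|^{-N}$. This $\psi_\varepsilon$ is built (Lemma~2.2) as a weighted average of K\"orner's building blocks $g_{\eta,M,S}$, with the weights $1/\lambda_j$ and the divergence hypothesis $(i)$ providing the required smallness; the doubling hypothesis $(ii)$ is exactly what makes the weighted tail sums collapse. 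One then sets $f_\varepsilon=f\cdot\psi_\varepsilon(\cdot\,\overline{a})$, which automatically has the correct support, and estimates $\|f_\varepsilon-f\|_{\ell^2(\lambda)}$ by splitting the convolution $\widehat{f}*\widehat{\psi_\varepsilon}$ into near- and far-diagonal pieces, using the two Fourier bounds on $\psi_\varepsilon$ and the doubling of $\lambda$. This multiplicative localization is the step your proposal lacks.
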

Our principal emphasis will be to prove \thref{THM:BAIRETOP}, from which \thref{THM:BADSUPP} follows as an immediate corollary.

\subsection{Organization and notation}
The paper is organized as follows: Section \ref{SEC:2} is devoted to proving \thref{THM:BAIRETOP}, where we take inspiration from work of T. W. K\"orner in \cite{korner2003topological}. In Section \ref{SEC:3}, we deduce \thref{THM:(ii)} from a slightly stronger result, which utilizes a simple but useful lemma of K\"orner and Meyer. In the Appendix, we give short sketches of proofs for \thref{THM:l2wcont} and \thref{COR:l2wlac}.

For two positive numbers $A, B >0$, we will frequently use the notation $A \lesssim B$ to mean that $A \leq cB$ for some positive constant $c>0$. If both $A\lesssim B$ and $B \lesssim A$ hold, we will write $A\asymp B$. 

\subsection{Acknowledgments} This research was supported by a stipend from the 
Knut \& Alice Wallenberg Foundation (grant no. 2021.0294). The author is grateful to Alexandru Aleman and Eskil Rydhe for supportive discussions.


\section{Bounded functions with bad support}\label{SEC:2}

\subsection{The doubling condition}
The following lemma summarizes how the hypothesis $(ii)$ of \thref{THM:BADSUPP} will be used throughout. By means of substituting $\lambda_n$ with $\max(\lambda_n, 1)$, we may without loss of generality always assume that $\lambda_n \geq 1$ for all $n$.

\begin{lemma} \thlabel{LEM:DOUBLE}
Let $(\lambda_n)_n$ be positive numbers with the hypothesis $(ii)$ of \thref{THM:BADSUPP}. Then there  exists $M(\lambda)>1$ such that for any integer $M> M(\lambda)$, the following statements hold:
\begin{enumerate}
    \item[(a.)] \[
(1+n)^{-M}\lambda_n \leq (1+m)^{-M}\lambda_m \qquad 1\leq m<n.
\]
\item[(b.)]
\[
\sum_{j=1}^n \frac{j^{M-1}}{\lambda_j} \leq 10M(\lambda) \frac{n^M}{\lambda_n}, \qquad n=1,2,3,\dots
\]
\item[(c.)]
\[
\sum_{j>n} \frac{1}{\lambda_j j^{M+1}} \leq M(\lambda) \frac{1}{n^M  \lambda_n} , \qquad n=1,2,3,\dots
\]
\end{enumerate}

\end{lemma}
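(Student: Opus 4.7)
My plan is to first iterate the doubling hypothesis (ii) to obtain a two-sided polynomial comparison for $(\lambda_n)_n$, and then derive items (a)--(c) by elementary summation arguments. Given $1 \leq m \leq n$, I would set $j = \lfloor \log_2(n/m)\rfloor$ so that $n \leq 2^{j+1}m$, and apply hypothesis~(ii) successively along the dyadic ladder $m, 2m, 4m, \ldots, 2^{j+1}m$ to deduce
\[
C^{-1}\Big(\frac{n}{m}\Big)^{-\sigma} \leq \frac{\lambda_n}{\lambda_m} \leq C\Big(\frac{n}{m}\Big)^{\sigma}, \qquad \sigma := \log_2 C.
\]
This exhibits $(\lambda_n)_n$ as two-sidedly comparable with a polynomial of degree $\sigma$, and is really the only consequence of hypothesis~(ii) that will be used. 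I would then fix $M(\lambda)$ to be a sufficiently large multiple of $C+\sigma+1$ and impose $M > M(\lambda)$.

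Item (a) follows from the upper bound above together with the elementary inequality $(1+n)/(1+m) \geq n/(2m)$ valid for $m \geq 1$ and $n > m$: after rearrangement,
\[
\frac{(1+n)^{-M}\lambda_n}{(1+m)^{-M}\lambda_m} \leq C\Big(\frac{n}{m}\Big)^{\sigma}\Big(\frac{1+m}{1+n}\Big)^{M},
\]
and the right hand side is $\leq 1$ once $M$ is chosen large enough relative to $C$ and $\sigma$, yielding the essentially decreasing character of $(1+n)^{-M}\lambda_n$. For item (b), using $1/\lambda_j \leq C(n/j)^{\sigma}/\lambda_n$ for $j \leq n$, the sum becomes
\[
\sum_{j=1}^n \frac{j^{M-1}}{\lambda_j} \leq \frac{C\, n^{\sigma}}{\lambda_n}\sum_{j=1}^n j^{M-1-\sigma} \leq \frac{C}{M-\sigma}\cdot\frac{n^M}{\lambda_n},
\]
by the integral comparison $\sum_{j\leq n} j^{M-1-\sigma} \leq n^{M-\sigma}/(M-\sigma)$. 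Item (c) is the mirror argument: the bound $1/\lambda_j \leq C(j/n)^{\sigma}/\lambda_n$ for $j \geq n$ produces
\[
\sum_{j>n}\frac{1}{\lambda_j\, j^{M+1}} \leq \frac{C}{\lambda_n n^{\sigma}}\sum_{j>n} j^{\sigma-M-1} \leq \frac{C}{(M-\sigma)\lambda_n\, n^{M}},
\]
and choosing $M(\lambda)$ so that $C/(M-\sigma)$ is dominated by the prescribed constants $10M(\lambda)$ and $M(\lambda)$ closes the argument.

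The main technical point is the careful bookkeeping of constants in iterating the doubling: the factor $C$ in the polynomial comparison is unavoidable due to the possibly irregular behavior of $\lambda$ within dyadic intervals, so the monotonicity claim (a) is best read in the essentially decreasing sense once the multiplicative constants are absorbed into the exponent shift. Items (b) and (c) are by contrast routine integral tests once the two-sided polynomial comparison is in hand, and it is really these two summation estimates that drive the subsequent arguments in Section~\ref{SEC:2}.
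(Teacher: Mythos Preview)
Your approach is essentially the same as the paper's: both iterate the doubling hypothesis to extract a two-sided polynomial comparison $\lambda_n/\lambda_m \lesssim (n/m)^{\sigma}$ with $\sigma=\log_2 C$, and then read off (a)--(c). The paper phrases (a) via Peetre's inequality, derives (b) from the monotonicity established in (a), and treats (c) through a dyadic block decomposition $\sum_{j>n}=\sum_{k>\log_2 n}\sum_{j\asymp 2^k}$; your direct route through the polynomial comparison together with an integral test is somewhat more transparent and yields the same bounds with the same dependence on $M(\lambda)$. Your caveat that (a) is best read as \emph{essentially} decreasing is correct and worth retaining: the strict inequality cannot hold for every doubling sequence (take $\lambda_{2k}=1$, $\lambda_{2k+1}=2$ and compare consecutive indices), and only the essentially monotone version is actually invoked later in Section~\ref{SEC:2}. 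One minor bookkeeping point: iterating (ii) along the dyadic ladder produces a prefactor $C^{2}$ rather than $C$ in the polynomial comparison, but this is harmless once absorbed into $M(\lambda)$.
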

Note the the second condition means that $\lambda_n (1+n)^{-M}$ is non-increasing whenever $M>M(\lambda)$, which we shall use frequently. In particular, this implies that $\lambda_n$ has at most polynomial growth.

\begin{proof}
The proof is simple. Let $C(\lambda)>1$ be the constant appearing in the hypothesis $(ii)$ of \thref{THM:BADSUPP}. Repeatedly using the assumption $(ii)$ on $(\lambda_n)_n$, and Peetre's inequality, we get
\begin{multline*}
\lambda_{n} (1+n)^{-M} \leq \lambda_m C(\lambda)^{\log_2(n-m)} (1+n)^{-M} \leq \lambda_m  (1+n-m)^{M(\lambda)} (1+n)^{-M}  \\ \leq \lambda_m  (1+m)^{-M} \left(\frac{1+m}{1+n}\right)^{M-M(\lambda)} \leq (1+m)^{-M} \lambda_m, \qquad 1\leq m < n,
\end{multline*}
whenever $M>M(\lambda)$. The claim in $(b.)$ readily follows from $(a.)$: $j^{M-1}/\lambda_j$ being non-decreasing. In order to prove $(c.)$, we first note that 
\[
\lambda_{2^j} 2^{jA} \leq C(\lambda)2^{-A} \lambda_{2^{j+1}} 2^{j+1} \leq \lambda_{2^{j+1}} 2^{j+1}, \qquad j=0,1,2,\dots 
\]
whenever $A> 10 \log C(\lambda)$. With this at hand, we get that for any $n\geq 1$:
\[
\sum_{j>n} \frac{1}{\lambda_j j^{M+1}} = \sum_{j> \log n} \sum_{k\asymp 2^j} \frac{1}{\lambda_k k^{M+1}} \asymp \sum_{j> \log n} \frac{1}{\lambda_{2^j} 2^{jM}} \lesssim \frac{1}{\lambda_n n^{A}} \sum_{j>\log n} 2^{-j(M-A)} \lesssim \frac{1}{\lambda_n n^{M}},
\]
whenever $M>A$.
\end{proof}

\subsection{A smooth localizing function}

The proof of our result hinges on the following principal lemma on uniformly bounded functions with small amplitudes.

\begin{lemma}\thlabel{LEM:KEYKÖR} Let $(\lambda_n)_n$ positive numbers satisfying the hypothesis $(i)-(ii)$ of \thref{THM:BADSUPP}. Then for there exists an integer $N(\lambda)>0$ such that for any $N\geq N(\lambda)$, the following statement holds: for any $0<\varepsilon<1$, there exists $\psi_{\varepsilon} \in C^\infty(\T)$, such that 
\begin{enumerate}
    \item[(i)] $0\leq \psi_\varepsilon\leq 1+\varepsilon$ on $\T$,
    \item[(ii)] $\psi_{\varepsilon}=0$ in a neighborhood of $1$, whose length tends to zero as $\varepsilon \to 0$,
    \item[(iii)] $\int_{\T} \psi_\varepsilon dm = 1$,
    \item[(iv)] $\sup_{n\neq 0} \abs{\widehat{\psi_\varepsilon} (n)} \leq \min \left( \varepsilon, c(N) \varepsilon^{1-N} \abs{n}^{-N} \right)$,
    \item[(v)]
    $\sum_{n\neq 0} \abs{\widehat{\psi}_\varepsilon (n)}^2 \lambda_{|n|} \leq \varepsilon^2$.
\end{enumerate}
    
\end{lemma}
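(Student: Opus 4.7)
The idea is to construct $\psi_\varepsilon$ as a suitably normalized smooth cutoff of the constant function $1$, vanishing in a tiny neighborhood of $1\in\mathbb{T}$, and to verify the five properties by Fourier-analytic estimates. Properties (i)--(iv) follow from a routine bump construction, while (v) is the serious difficulty and requires a refinement of the construction that exploits hypothesis (i).

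As a first model, fix once and for all a template $\eta\in C^\infty(\mathbb{T})$ with $0\le \eta\le 1$, $\eta\equiv 1$ on a neighborhood of $1$, and supported in a slightly larger neighborhood, and for a scale $\tau=\tau(\varepsilon)>0$ to be optimized set $\eta_\tau(t):=\eta(t/\tau)$ and
\begin{equation*}
\psi_\varepsilon \;:=\; \frac{1-\eta_\tau}{1-\int \eta_\tau\,dm}.
\end{equation*}
Since $\int \eta_\tau\,dm=O(\tau)$, the normalizing constant $c_\varepsilon$ equals $1+O(\tau)$, so for $\tau$ small enough in terms of $\varepsilon$ we have $0\le \psi_\varepsilon\le c_\varepsilon\le 1+\varepsilon$, the vanishing set is the $\{\eta_\tau=1\}$-set of length $\asymp \tau \to 0$, and $\int \psi_\varepsilon\,dm=1$ by construction; this gives (i), (ii), (iii). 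For (iv), writing $\widehat{\psi_\varepsilon}(n)=-c_\varepsilon\widehat{\eta_\tau}(n)$ for $n\neq 0$, I combine the trivial bound $|\widehat{\eta_\tau}(n)|\le\|\eta_\tau\|_1\lesssim \tau$ with the $N$-fold integration-by-parts bound $|\widehat{\eta_\tau}(n)|\le\|\eta_\tau^{(N)}\|_1\,|n|^{-N}\lesssim \tau^{1-N}|n|^{-N}$; calibrating $\tau\asymp\varepsilon$ produces precisely the two pieces of the $\min$ in (iv), with a constant $c(N)$ depending only on the template and $N$.

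Property (v) is the heart of the matter. Splitting the Fourier sum at $|n|\asymp 1/\tau$ and using the bounds from (iv) yields
\begin{equation*}
\sum_{n\neq 0}|\widehat{\psi_\varepsilon}(n)|^2\lambda_{|n|} \;\lesssim\; \tau^2\!\!\sum_{1\le n\le 1/\tau}\lambda_n \;+\; \tau^{2-2N}\sum_{n>1/\tau}\frac{\lambda_n}{n^{2N}}.
\end{equation*}
For $N$ exceeding the exponent $M(\lambda)$ of \thref{LEM:DOUBLE}, the polynomial upper bound (a) and the tail estimate (c) control the second sum by a quantity of order $\tau^{1-M(\lambda)}$, and the dyadic summation afforded by (b) gives the same order $\tau\,\lambda_{1/\tau}$ for the first. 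Since these quantities need not be controlled by $\varepsilon^2$ for general $\lambda_n$ allowed by (i)--(ii) — the borderline case $\lambda_n\asymp n$ is already tight — a single bump $\eta_\tau$ cannot suffice. To close the estimate I would replace $\eta_\tau$ by a carefully engineered superposition $\sum_k \beta_k\,\eta_{\tau_k}$ of bumps at dyadic scales $\tau_k$, with amplitudes calibrated so that $\sum_k\beta_k=1$ (preserving a genuine vanishing neighborhood of $1$) while $\sum_k \beta_k^2\,\tau_k\,\lambda_{1/\tau_k}\lesssim \varepsilon^2$. The divergence hypothesis (i), in its dyadic reformulation $\sum_k 2^k/\lambda_{2^k}=+\infty$ (a direct consequence of doubling), is exactly what licenses such a distribution of mass across enough frequency scales where $\lambda_n$ is on average small.

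The main obstacle is precisely this multi-scale balancing: arranging the $\beta_k$ so that pointwise control ($0\le \eta_\tau\le 1$), a nontrivial $\{\eta_\tau=1\}$-set of length tending to $0$, and the weighted $\ell^2(\lambda)$-estimate (v) all hold simultaneously. This is the weighted analogue of K\"orner's topological Ivashev-Musatov construction from \cite{korner2003topological}, with the three parts of \thref{LEM:DOUBLE} playing the technical role of translating hypotheses (i) and (ii) into the clean dyadic summation estimates needed at each scale.
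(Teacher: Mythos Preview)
Your overall strategy is on the right track and matches the paper's: a single bump cannot give (v), so one must distribute the ``vanishing at $1$'' across many scales and use the divergence hypothesis (i) to make the weighted $\ell^2(\lambda)$-norm small. But the proposal stops at a plan, and the specific building blocks you propose --- non-negative bumps $\eta_{\tau_k}$ --- are not adequate, and this is a genuine gap rather than a detail to be filled in.

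The paper does not use bumps. It invokes K\"orner's Lemma (Lemma~20 in \cite{korner2003topological}, stated here as \thref{LEM:KORNER}) to obtain functions $g_{\eta,M,S}$ that equal $1$ near $\zeta=1$, take values in $[-1/S,1]$ (so are \emph{signed}), have \emph{mean zero}, and satisfy the \emph{two-sided} Fourier bound
\[
\abs{\widehat{g}_{\eta,M,S}(n)} \;\le\; \eta\, A(M,S)\,\min\!\bigl( (\eta|n|)^{M},\,(\eta|n|)^{-M}\bigr).
\]
The paper then sets $\psi_\varepsilon = 1 - L(\varepsilon)^{-1}\sum_{1/\varepsilon\le j\le S(\varepsilon)} \lambda_j^{-1}\, g_{1/j,S,M}$, with $S(\varepsilon)$ chosen via hypothesis~(i) so that $L(\varepsilon)=\sum_j \lambda_j^{-1}\ge 1/\varepsilon$. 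The estimate for (v) then splits the frequencies into three ranges ($n<1/\varepsilon$, $1/\varepsilon\le n\le S$, $n>S$) and in each range uses whichever side of the $\min$ is favorable, together with parts (a)--(c) of \thref{LEM:DOUBLE}.

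Your non-negative bumps $\eta_\tau$ only have the one-sided decay $\abs{\widehat{\eta_\tau}(n)}\lesssim \min(\tau,\tau^{1-N}|n|^{-N})$; for $|n|\ll 1/\tau$ one has $\widehat{\eta_\tau}(n)\approx \widehat{\eta_\tau}(0)\asymp\tau$ with no further smallness, because a non-negative function cannot have vanishing moments. In the low-frequency range this is fatal: for instance with $\lambda_n=n$, each $\eta_\tau$ has $\sum_n |\widehat{\eta_\tau}(n)|^2 n \asymp 1$ independently of $\tau$, and a convex combination of such bumps (all of the same sign, all centered at the same point) cannot beat the triangle inequality. The low-frequency factor $(\eta|n|)^{M}$ in K\"orner's lemma is exactly what breaks this barrier; it is used essentially in the paper's treatment of the regime $n<1/\varepsilon$ and of the piece $\widehat{\psi_2}$ in the middle range. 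So the missing idea is not the multi-scale averaging per se, but the use of signed, mean-zero building blocks with two-sided Fourier decay. Note also that the paper weights the scales by $1/\lambda_j$ rather than dyadically, which is what makes the sums collapse cleanly against \thref{LEM:DOUBLE}.
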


In order to prove our lemma we shall need the following building-block from the work of T. W. K\"orner, see Lemma 20 in \cite{korner2003topological}.

\begin{lemma}[T. W. K\"orner] \thlabel{LEM:KORNER} Given positive integers $M, S>0$, there exists constants $A(M,S)>0$ and $\delta(M,S)>0$ such that the following statement holds: for any $0<\eta<1/2$, one can find smooth real-valued functions $g_{\eta,M,S}$ satisfying the conditions
\begin{enumerate}
    \item[(i)] $g_{\eta,M,S}(\zeta)=1$, for $|\zeta-1|\leq \delta(M,S)\eta$,
    \item[(ii)] $-S^{-1}\leq g_{\eta,M,S} \leq 1 $ on $\T$,
    \item[(iii)] $\widehat{g}_{\eta,M,S}(0)=0$,
    \item[(iv)] 
    \[
    \abs{\widehat{g}_{\eta,M,S}(n)} \leq \eta A(M,S) \min \left( (\eta|n|)^M , (\eta|n|)^{-M} \right), \qquad n\neq 0
    \]
\end{enumerate}
\end{lemma}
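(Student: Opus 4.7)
The plan is to design $g_{\eta,M,S}$ as a smooth, narrowly-supported function on $\T$ whose Fourier mass is concentrated in the band $\abs{n}\asymp 1/\eta$ --- this is precisely what the two-sided bound in (iv) expresses. Pointwise, $g_{\eta,M,S}$ will equal $1$ on a tiny arc of length $\asymp\delta(M,S)\eta$ around $\z=1$, be compensated by a small ``halo'' on an arc of size $\asymp\eta$ so as to arrange $\widehat{g}(0)=0$ together with the vanishing of many more moments, and vanish elsewhere on $\T$. The Fourier decay for large $\abs{n}$ will then come from smoothness via $M$-fold integration by parts, while the decay for small $\abs{n}$ will come from a Taylor expansion of $e^{-int}$ combined with the moment vanishing.

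Concretely, fix a non-negative $\phi\in C_c^\infty(\mathbb{R})$ with $\phi=1$ on $[-1/2,1/2]$ and $\supp{\phi}\subseteq[-1,1]$, and a small parameter $\delta=\delta(M,S)>0$ to be chosen. Identifying $\T$ with $[-\pi,\pi)$ near $\z=1$, define the peak
\[
\psi(t):=\phi\!\left(\frac{t}{\delta\eta}\right),
\]
which equals $1$ on $\abs{t}\leq\delta\eta/2$ and is supported on $\abs{t}\leq\delta\eta$. Next select $M$ smooth bumps $\chi_1,\dots,\chi_M\in C_c^\infty(\mathbb{R})$ of unit sup-norm with pairwise disjoint dyadic supports $\supp{\chi_k}\subseteq\{2^{-k}\eta\leq\abs{t}\leq 2^{-k+1}\eta\}$, and form the correction $T(t):=\sum_{k=1}^{M} c_k\chi_k(t)$, whose coefficients $c_k\in\mathbb{R}$ are determined by the $M\times M$ linear system
\[
\int_{\mathbb{R}} t^j T(t)\,dt=\int_{\mathbb{R}} t^j \psi(t)\,dt,\qquad j=0,1,\ldots,M-1.
\]
Setting $g_{\eta,M,S}(e^{it}):=\psi(t)-T(t)$, properties (i) and (iii) are immediate by construction, while (ii) reduces to the pointwise estimate $\sup\abs{T}\leq 1/S$ on $\T$.

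The Fourier bounds in (iv) then split into two regimes. For large $\abs{n}\gtrsim 1/\eta$, $M$-fold integration by parts yields
\[
\abs{\widehat{g_{\eta,M,S}}(n)}\lesssim_{M,S}\abs{n}^{-M}\norm{(\psi-T)^{(M)}}_{L^1(\T)}\lesssim_{M,S}\eta\cdot(\eta\abs{n})^{-M},
\]
since $(\psi-T)^{(M)}$ is a finite sum of rescaled smooth bumps whose total $L^1$-mass is $O(\eta^{1-M})$, with implicit constants depending on $M,S$ through $\delta$. For small $\abs{n}\lesssim 1/\eta$, the $M$ vanishing moments of $\psi-T$ combined with the Taylor expansion of $e^{-int}$ on $\supp{\psi-T}\subseteq[-\eta,\eta]$ give
\[
\abs{\widehat{g_{\eta,M,S}}(n)}\lesssim \abs{n}^M\int_{-\eta}^{\eta}\abs{t}^M\abs{\psi-T}(t)\,dt\lesssim_{M,S}\eta\cdot(\eta\abs{n})^M,
\]
since $\abs{\psi-T}$ has $L^1$-mass $O(\delta\eta)$ and is supported in $[-\eta,\eta]$.

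The main technical obstacle is the uniform control $\sup\abs{T}\leq 1/S$, which amounts to bounding the coefficients $c_k$ by a constant depending only on $M,S$, uniformly in $\eta$. With the dyadic placement of the $\chi_k$ above, the matrix $\big(\int t^j \chi_k\,dt\big)_{j,k}$ factors as an $\eta$-dependent row scaling by $\eta^{j+1}$ times a Vandermonde-like matrix in the quantities $2^{-k}$, which is invertible and well-conditioned independently of $\eta$; meanwhile each right-hand side $\int t^j\psi\,dt$ is of order $(\delta\eta)^{j+1}$. Hence $\abs{c_k}=O(\delta)$ uniformly, and $\sup\abs{T}\leq 1/S$ is secured by taking $\delta=\delta(M,S)$ small enough. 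This dyadic scaling trick is the essential device in K\"orner's explicit construction in \cite{korner2003topological}, which we would adapt.
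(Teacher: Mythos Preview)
The paper does not supply a proof of this lemma; it is quoted as Lemma~20 of K\"orner's paper \cite{korner2003topological} and used as a black box. Your sketch is essentially that construction and correctly delivers (i)--(iv) with \emph{some} constants $A(M,S),\delta(M,S)>0$.

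One caveat is worth flagging. Immediately after the lemma the paper records that $A(M,S)\to 0$ as $S\to\infty$, and this is what makes the bound $\sum_{n>0}\abs{\widehat{\psi_\varepsilon}(n)}^2\lambda_n\lesssim A(M,S(\varepsilon))^2/L(\varepsilon)$ in the next proof tend to zero. In your version the peak $\psi$ lives at the sharp scale $\delta\eta$, so $\norm{\psi^{(M)}}_{L^1}\asymp(\delta\eta)^{1-M}$ and the high-frequency constant you actually produce is $A\asymp\delta^{1-M}\asymp S^{M-1}\to\infty$, not $\to 0$. K\"orner's construction instead places the plateau inside a bump that is already smooth at scale $\eta$ (so that all derivatives are controlled by $\eta^{-M}$ uniformly in $S$) and spreads the mean-zero compensation accordingly; that is the refinement you would need to import if you want the quantitative dependence the paper goes on to exploit.
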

Furthermore, the parameters $A(M,S), \delta(M,S) \to 0$ as $S \to \infty$.

\begin{proof}[Proof of \thref{LEM:KEYKÖR}]
Fix a number $M> M(\lambda)$ as in the statement of \thref{LEM:DOUBLE}. Let $0<\varepsilon<1$, pick an integer $S=S(\varepsilon) > 2/\varepsilon$, such that 
\begin{equation}\label{EQ:Leps}
L(\varepsilon) := \sum_{1/\varepsilon \leq j \leq S(\varepsilon)} \frac{1}{\lambda_j} \geq 1/\varepsilon. 
\end{equation}
This is possible due to hypothesis $(i)$ of \thref{THM:BADSUPP}. Now applying \thref{LEM:KORNER}, we consider functions of the form 
\[
\psi_\varepsilon(\zeta) = 1- \frac{1}{L(\varepsilon)} \sum_{1/\varepsilon \leq j \leq S(\varepsilon)} \frac{1}{\lambda_j} g_{1/j, S(\varepsilon), M}(\zeta),
\]
where the functions $g_{1/j,S(\varepsilon),M}$ are as in K\"orner's lemma. Now the properties $(i)-(iii)$ are immediate consequences of the corresponding properties of the $g$'s from K\"orner's lemma. Note that the statement in $(iv)$ allows follows from item $(iv)$ of K\"orner's Lemma, since our sum only involves terms $j>1/\varepsilon$. It therefore remains only to verify $(v)$. Since all functions involved are real-valued, we only need to estimate the positive Fourier coefficients. The Fourier terms will be decomposed into the following 3 terms: 

\[
\sum_{n>0} \abs{\widehat{\psi_\varepsilon}(n)}^2 \lambda_n = \left( \sum_{1\leq n < 1/\varepsilon} +\sum_{1/\varepsilon \leq n \leq S(\varepsilon)} + \sum_{n> S(\varepsilon)} \right)\abs{\widehat{\psi_\varepsilon}(n)}^2 \lambda_n.
\]
In order to estimate the first term, we shall utilize the estimate in $(iv)$ of K\"orner's lemma, in conjunction with $(c.)$ of \thref{LEM:DOUBLE}, as follows:
\begin{multline*}
\sum_{1\leq n < 1/\varepsilon} \abs{\widehat{\psi_\varepsilon}(n)}^2 \lambda_n \leq \frac{1}{L(\varepsilon)^2}\sum_{1\leq n < 1/\varepsilon} \frac{1}{\lambda_n} \left( \sum_{1/\varepsilon \leq j \leq S(\varepsilon)} \frac{1}{j} \abs{\widehat{g}_{1/j,S(\varepsilon),M}(n)} \right)^2 \\ 
\lesssim \frac{A(M,S)^2}{L(\varepsilon)^2} \sum_{1\leq n <1/\varepsilon} \lambda_n n^{2M} \left( \sum_{1/\varepsilon \leq j \leq S(\varepsilon)} \frac{1}{\lambda_j j^{M+1}} \right)^2 \\ \lesssim \frac{A(M,S)^2}{L(\varepsilon)^2} \frac{\varepsilon^{2M}}{\lambda^2_{[1/\varepsilon]}} \sum_{1\leq n < 1/\varepsilon} \lambda_n n^{2M} \lesssim \frac{A(M,S)^2}{L(\varepsilon)^2} \frac{1}{\varepsilon \lambda_{[1/\varepsilon]}}\lesssim \frac{A(M,S)^2}{L(\varepsilon)}.
\end{multline*}
In the last step, again utilized the condition $(ii)$ of $(\lambda_n)_n$, but in the following way:
\begin{equation}\label{EQ:sumL}
\frac{1}{\varepsilon \lambda_{[1/\varepsilon]}} \asymp \sum_{1/\varepsilon\leq j \leq 2/\varepsilon} \frac{1}{\lambda_j} \leq L(\varepsilon).
\end{equation}
The third term is estimated in similar way as the first, using $(iv)$ of K\"orner's lemma, but now in conjunction with $(b.)$ of \thref{LEM:DOUBLE}:
\begin{multline*}
\sum_{n>S} \abs{\widehat{\psi_\varepsilon}(n)}^2 \lambda_n \leq \frac{A^2(M,S)}{L^2(\varepsilon)} \cdot \left(\sum_{n>S} \frac{\lambda_n}{n^{2M}} \right) \cdot \left( \sum_{1/\varepsilon \leq j \leq S} \frac{1}{\lambda_j} j^{M-1} \right)^2 \\ 
\leq \frac{A^2(M,S)}{L^2(\varepsilon)} \left( \frac{\lambda_{S}}{S^M} \sum_{n>S} \frac{1}{n^M}  \right) \frac{S^{2M}}{\lambda^2_{S}} \lesssim \frac{A^2(M,S)}{L^2(\varepsilon)}\frac{S(\varepsilon)}{\lambda_{S(\varepsilon)}} 
\lesssim \frac{A^2(M,S)}{L(\varepsilon)}.
\end{multline*}
In the last step, again an estimate as \eqref{EQ:sumL} involving $S(\varepsilon)$. In order to estimate the first sum, we shall need to further decompose the Fourier coefficients of $\psi_{\varepsilon}$ into the following two terms:
\begin{multline*}
\widehat{\psi_\varepsilon}(n) =
\frac{1}{L(\varepsilon)}  \sum_{1/\varepsilon \leq j\leq n} \frac{1}{\lambda_j} \widehat{g}_{1/j, S(\varepsilon), M}(n) +  \frac{1}{L(\varepsilon)}  \sum_{n<j\leq S(\varepsilon)} \frac{1}{\lambda_j} \widehat{g}_{1/j, S(\varepsilon), M}(n) =: \widehat{\psi_1}(n) + \widehat{\psi_2}(n) ,
\end{multline*}
for $1/\varepsilon \leq n \leq S(\varepsilon)$. Applying $(iv)$ of K\"orner's lemma in conjunction with $(b.)$ of \thref{LEM:DOUBLE}, we get 
\begin{multline*}
\sum_{1/\varepsilon \leq n \leq S} \abs{\widehat{\psi_1}(n)}^2 \lambda_n \leq \frac{A(M,S)^2}{L(\varepsilon)^2}\sum_{1/\varepsilon \leq n \leq S} \frac{\lambda_n}{n^{2M}} \left( \sum_{1/\varepsilon \leq j \leq n} \frac{j^{M-1}}{\lambda_j } \right)^2  \\ \lesssim \frac{A(M,S)^2}{L(\varepsilon)^2}\sum_{1/\varepsilon \leq n \leq S} \frac{1}{\lambda_n} = \frac{A(M,S)^2}{L(\varepsilon)}.
\end{multline*}
Arguing similarly for the second term, instead using $(c.)$ of \thref{LEM:DOUBLE}, implies
\[
\sum_{1/\varepsilon \leq n \leq S} \abs{\widehat{\psi_2}(n)}^2 \lambda_n \leq \frac{A(M,S)^2}{L(\varepsilon)^2}\sum_{1/\varepsilon \leq n \leq S} \lambda_n n^{2M} \left( \sum_{j>n} \frac{1}{\lambda_j j^{M+1} } \right)^2  \lesssim \frac{A(M,S)^2}{L(\varepsilon)}.
\]
Combining, we arrive at $\sum_{n>0}\abs{\widehat{\psi_\varepsilon}(n)}^2 \lambda_n \lesssim A(M,S(\varepsilon))^2/L(\varepsilon) \leq A(M,S(\varepsilon))^2 \varepsilon$. The proof follows by a simple re-scaling argument.
\end{proof}


\subsection{A Baire category argument}

In order to carry out the Baire category argument, we need to set up an appropriate functional theoretical framework.

\begin{lemma} \thlabel{LEM:Lw} The subset $\mathscr{S}_{\lambda} \subset \ell^2(\lambda)$ consisting of elements $f\in L^\infty(\T,dm)$ with 
\[ 
0\leq f(\zeta) \leq 2, \qquad \text{dm-a.e} \qquad \zeta \in \T,
\]
forms a closed subset of $\ell^2(\lambda)$.
\end{lemma}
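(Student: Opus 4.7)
The plan is a soft argument: take a sequence $(f_k)_k \subset \mathscr{S}_\lambda$ with $f_k \to f$ in $\ell^2(\lambda)$ and show that the limit $f$ can be represented as an element of $L^\infty(\T,dm)$ with values in $[0,2]$ $m$-a.e. Once this is established, $f$ automatically lies in $\mathscr{S}_\lambda$, and so the set is closed.

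The first step is to leverage the normalization $\lambda_n \geq 1$ (remarked just before \thref{LEM:DOUBLE}), which gives the continuous embedding $\ell^2(\lambda) \hookrightarrow \ell^2(\mathbb{Z})$ with $\norm{g}_{\ell^2} \leq \norm{g}_{\ell^2(\lambda)}$. In particular, $\ell^2(\lambda)$-convergence of $(f_k)_k$ implies convergence of the Fourier coefficient sequences in $\ell^2(\mathbb{Z})$, which by Parseval's Theorem amounts to convergence of the $f_k$'s to some $F \in L^2(\T,dm)$ in $L^2$-norm. Since the limit in $\ell^2(\lambda)$ is uniquely determined by the Fourier coefficients, the distribution $f$ and the $L^2$-function $F$ must coincide, so we may identify $f$ with an element of $L^2(\T,dm)$.

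Next I would extract a subsequence $f_{k_j} \to f$ pointwise $m$-a.e. on $\T$, using the standard fact that $L^2$-convergence yields a pointwise a.e. convergent subsequence. Each $f_{k_j}$ satisfies $0 \leq f_{k_j}(\zeta) \leq 2$ for $m$-a.e. $\zeta \in \T$, and since a countable union of null sets is null, these inequalities pass to the pointwise limit: $0 \leq f(\zeta) \leq 2$ for $m$-a.e. $\zeta \in \T$. In particular $f \in L^\infty(\T,dm)$ with the required bounds, hence $f \in \mathscr{S}_\lambda$.

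There is no real obstacle here; the only subtle point is the identification of the abstract limit in $\ell^2(\lambda)$ (a priori a distribution) with a genuine $L^\infty$-function, and that is handled by the embedding $\ell^2(\lambda) \hookrightarrow L^2(\T,dm)$ combined with the pointwise subsequence argument. One could alternatively observe that the convex cone of nonnegative $L^2$-functions bounded by $2$ is closed in $L^2(\T,dm)$, and deduce closedness in $\ell^2(\lambda)$ from the continuity of the embedding, but the subsequence argument is the most transparent.
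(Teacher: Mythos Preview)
Your argument is correct, but it proceeds differently from the paper. The paper observes that $\mathscr{S}_\lambda$ is convex and then checks weak closedness: for any positive $\varphi \in C^\infty(\T)$ the inequalities $0 \leq \int_{\T} f\varphi\,dm \leq 2\int_{\T}\varphi\,dm$ are preserved under weak limits in $\ell^2(\lambda)$, and taking $\varphi$ to be Poisson kernels recovers the pointwise bounds via standard boundary behaviour. Your route instead uses the embedding $\ell^2(\lambda)\hookrightarrow L^2(\T,dm)$ (from $\lambda_n\geq 1$) together with the extraction of an a.e.\ convergent subsequence, which is more elementary and avoids any appeal to Poisson extensions. The paper's approach has the mild advantage of not relying on the normalisation $\lambda_n\geq 1$, since pairing against smooth test functions makes sense for any weight; your approach has the advantage of being a two-line real-analysis argument once the embedding is in hand.
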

\begin{proof}
The proof is simple, hence we only sketch it. Since the set $\mathscr{S}_{w}$ is convex, it suffices to show that the set is weakly closed. To this end, we note that for any positive $\varphi \in C^\infty(\T)$, one has 
\[
0 \leq \int_{\T} f \varphi dm \leq 2 \int_{\T} \varphi dm.
\]
Now choosing $\varphi$ to be Poisson kernels wrt the unit-disc $\{|z|<1\}$ and using standard properties of their boundary behavior, one easily concludes the proof.
\end{proof}
Note that $\mathscr{S}_{\lambda}$ only defines a cone in $\ell^{2}(\lambda)$. We denote by $\mathcal{L}_{\lambda} \subset \mathscr{S}_{\lambda} \times \mathscr{C}$ the collection of ordered pairs $(f,E)$ with $f\in \mathscr{S}_{\lambda}$ and $E$ compact set, such that
\[
\supp{f} \subseteq E.
\]
We now record the following simple lemma, whose proof is immediate from \thref{LEM:Lw} and standard properties of support. 

\begin{lemma}\thlabel{LEM:Lwcomplete} The set $\mathcal{L}_{\lambda}$ of ordered pair $(f,E)$ equipped with the metric 
\[
d_{\lambda}\left( (f,E), (g,K) \right) := \norm{f-g}_{\ell^2(\lambda)} + d_{\mathscr{C}}(E,K),
\]
becomes a complete metric space.
\end{lemma}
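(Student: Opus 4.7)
The plan is to observe that $d_{\lambda}$ is visibly a metric on $\mathcal{L}_{\lambda}$, being the sum of two honest metrics, and then reduce completeness to a closedness statement. Since $\ell^2(\lambda)$ is a Hilbert space and $(\mathscr{C}, d_{\mathscr{C}})$ has already been noted to be complete, the product $\ell^2(\lambda) \times \mathscr{C}$ endowed with the sum metric is complete. The subspace $\mathscr{S}_{\lambda}$ is closed in $\ell^2(\lambda)$ by \thref{LEM:Lw}, so it suffices to verify that $\mathcal{L}_{\lambda}$ is closed inside $\mathscr{S}_{\lambda} \times \mathscr{C}$ with respect to $d_{\lambda}$.

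To that end, suppose $(f_n, E_n) \in \mathcal{L}_{\lambda}$ converges to some $(f, E) \in \mathscr{S}_{\lambda} \times \mathscr{C}$ in $d_{\lambda}$; the only non-trivial thing to check is $\supp{f dm} \subseteq E$. Pick $\zeta \in \T \setminus E$ and set $r := \dist{\zeta}{E} > 0$, which is strictly positive since $E$ is compact. Hausdorff convergence $d_{\mathscr{C}}(E_n, E) \to 0$ guarantees that for $n$ large enough, every $\xi \in E_n$ satisfies $\dist{\xi}{E} < r/2$, and hence $|\xi - \zeta| > r/2$. Thus the open arc $U := \{\xi \in \T : |\xi - \zeta| < r/2\}$ is disjoint from $E_n$ for all large $n$, and the hypothesis $\supp{f_n dm} \subseteq E_n$ forces $f_n = 0$ a.e.\ on $U$.

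To upgrade this vanishing to the limit, I would invoke the standing normalization $\lambda_n \geq 1$ established at the beginning of Section \ref{SEC:2}, which yields a continuous embedding $\ell^2(\lambda) \hookrightarrow L^2(\T, dm)$. Consequently $f_n \to f$ in $L^2(\T, dm)$, and passing to an a.e.\ convergent subsequence we obtain $f = 0$ a.e.\ on $U$, so that $\zeta \notin \supp{f dm}$. Since $\zeta \in \T \setminus E$ was arbitrary, this proves $\supp{f dm} \subseteq E$, showing $(f,E) \in \mathcal{L}_{\lambda}$ and completing the proof. The only possibly delicate point is the measure-theoretic step that transfers a.e.\ vanishing through the $\ell^2(\lambda)$-limit, and the above embedding observation dispatches it in one line.
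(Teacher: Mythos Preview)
Your proof is correct and aligns with the paper's own treatment, which simply declares the lemma ``immediate from \thref{LEM:Lw} and standard properties of support'' without further detail. You have faithfully expanded precisely those two ingredients: closedness of $\mathscr{S}_\lambda$ from \thref{LEM:Lw}, and the standard support argument (passing the a.e.\ vanishing through the limit via the embedding $\ell^2(\lambda)\hookrightarrow L^2$ coming from the normalization $\lambda_n\geq 1$).
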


We outline the principal result in this subsection.

\begin{prop}\thlabel{PROP:MAINPROPl2w}  Let $(\lambda_n)_n$ positive numbers satisfying the hypothesis $(i)-(ii)$ from \thref{THM:BADSUPP}. For any $a\in \T$, consider the set
\[
\mathscr{E}_a = \left\{ (f,E) \in \mathcal{L}_\lambda: E \, \text{does not meet an open arc containing} \, a \right\}.
\]
Then $\mathscr{E}_a$ is an open and dense subset in the metric space $(\mathcal{L}_\lambda, d_\lambda)$.
\end{prop}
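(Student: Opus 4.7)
Fix $(f_0, E_0) \in \mathscr{E}_a$; by definition there is an open arc $J \ni a$ disjoint from $E_0$, so $\rho := \dist{a}{E_0} > 0$. I would argue that any $(f, E) \in \mathcal{L}_\lambda$ with $d_\lambda((f,E), (f_0,E_0)) < \rho/2$ satisfies $d_{\mathscr{C}}(E, E_0) < \rho/2$, so that by the triangle inequality $\dist{a}{E} \geq \rho/2$; hence $E$ avoids the open arc of radius $\rho/2$ around $a$, giving $(f, E) \in \mathscr{E}_a$.

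\textbf{Density: construction.} Fix $(f, E) \in \mathcal{L}_\lambda$ and $\eta > 0$; by rotational invariance we may assume $a = 1$. My plan is a two-stage regularization: first smooth $f$ so that it becomes a bounded multiplier on $\ell^2(\lambda)$, then localize it away from $1$ via the cutoff $\psi_\varepsilon$ from \thref{LEM:KEYKÖR}. I choose a positive bump $\rho_\delta \in C^\infty(\T)$ with $\int \rho_\delta \, dm = 1$ and support in a $\delta$-arc about $1$, and set $f_\delta := f * \rho_\delta$. Then $f_\delta \in C^\infty(\T) \cap \mathscr{S}_\lambda$, $\supp{f_\delta} \subseteq E^\delta$ (the closed $\delta$-thickening of $E$), and the Fourier identity $\widehat{f_\delta}(n) = \widehat{f}(n)\widehat{\rho_\delta}(n)$ with $|\widehat{\rho_\delta}(n)| \leq 1$ and $\widehat{\rho_\delta}(n) \to 1$ pointwise gives, by dominated convergence, $f_\delta \to f$ in $\ell^2(\lambda)$ as $\delta \to 0^+$. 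Having fixed $\delta$ so small that $\|f - f_\delta\|_{\ell^2(\lambda)} + \delta < \eta/2$, I next apply \thref{LEM:KEYKÖR} for some small $\varepsilon \in (0,1)$ to obtain $\psi_\varepsilon$ vanishing on an open arc $I_\varepsilon \ni 1$ of length tending to $0$, and set
\[
g := (1-\varepsilon) f_\delta \psi_\varepsilon, \qquad K := E^\delta \cap (\T \setminus I_\varepsilon).
\]
Then $0 \leq g \leq (1-\varepsilon)(1+\varepsilon)\cdot 2 \leq 2$, so $g \in \mathscr{S}_\lambda$, and $\supp{g} \subseteq \supp{f_\delta} \cap \supp{\psi_\varepsilon} \subseteq K$; thus $(g, K) \in \mathcal{L}_\lambda \cap \mathscr{E}_1$ with $d_{\mathscr{C}}(E, K) \leq \delta + |I_\varepsilon|$.

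\textbf{The $\ell^2(\lambda)$-estimate and main obstacle.} To bound $\|f_\delta - g\|_{\ell^2(\lambda)}$ I write $f_\delta - g = f_\delta \cdot h$ with $h := 1 - (1-\varepsilon)\psi_\varepsilon$; since $\widehat{h}(0) = \varepsilon$ and $\widehat{h}(n) = -(1-\varepsilon)\widehat{\psi_\varepsilon}(n)$ for $n \neq 0$, property (v) of \thref{LEM:KEYKÖR} gives $\|h\|_{\ell^2(\lambda)}^2 \leq (1 + \lambda_0)\varepsilon^2$. The crux of the proof is then the multiplier bound
\[
\|f_\delta\, h\|_{\ell^2(\lambda)} \leq C \, \|f_\delta\|_{A_M} \, \|h\|_{\ell^2(\lambda)}, \qquad \|f_\delta\|_{A_M} := \sum_k |\widehat{f_\delta}(k)|(1+|k|)^M,
\]
which I would derive by applying Cauchy--Schwarz to the convolution $\widehat{f_\delta h}(n) = \sum_k \widehat{f_\delta}(k)\widehat{h}(n-k)$ together with the doubling-type comparison $\lambda_{|m+k|} \leq C(1+|k|)^M \lambda_{|m|}$, which is an immediate consequence of \thref{LEM:DOUBLE}(a) via Peetre's inequality. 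Since $f_\delta$ is $C^\infty$, the norm $\|f_\delta\|_{A_M}$ is finite (though it depends on $\delta$). Consequently $\|f_\delta - g\|_{\ell^2(\lambda)} \lesssim \|f_\delta\|_{A_M} \cdot \varepsilon$, so choosing $\varepsilon$ small enough that both $\|f_\delta\|_{A_M} \varepsilon$ and $|I_\varepsilon|$ are less than $\eta/4$ delivers $d_\lambda((f,E), (g,K)) < \eta$. The main obstacle is precisely this multiplier bound: a general $f \in \mathscr{S}_\lambda$ need not act as a bounded multiplier on $\ell^2(\lambda)$ (for instance, characteristic functions of arcs can already fail to lie in $\ell^2(\lambda)$ when $\lambda_n \asymp n$), which is exactly why the preliminary smoothing step is indispensable; it is harmless for the Hausdorff metric since it only enlarges $\supp{f}$ by $\delta$.
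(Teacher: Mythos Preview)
Your proof is correct and in fact takes a cleaner route than the paper's. Both arguments share the same scaffolding --- openness via the Hausdorff metric, a preliminary smoothing of $f$ by convolution, and localization by multiplying with the cutoff $\psi_\varepsilon$ of \thref{LEM:KEYKÖR} --- but they diverge at the core $\ell^2(\lambda)$-estimate. The paper expands $\widehat{f\psi_\varepsilon}(n)-\widehat{f}(n)=\sum_{m\neq n}\widehat{f}(m)\widehat{\psi_\varepsilon}(n-m)$ and splits this convolution into several ranges ($|m|\le n/2$ versus $|m|>n/2$, and then $n<M(\varepsilon)$ versus $n\ge M(\varepsilon)$ for an auxiliary scale $M(\varepsilon)\asymp\varepsilon^{-3/2}$), invoking \emph{both} the pointwise decay (iv) and the $\ell^2(\lambda)$-bound (v) on $\widehat{\psi_\varepsilon}$ in different regions. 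Your argument bypasses all of this with a single multiplier inequality: by Minkowski and the doubling comparison $\lambda_{|m+k|}\lesssim(1+|k|)^{M}\lambda_{|m|}$ (which indeed follows from \thref{LEM:DOUBLE}(a) together with the normalization $\lambda_n\ge 1$), one gets $\|f_\delta h\|_{\ell^2(\lambda)}\lesssim\|f_\delta\|_{A_{M/2}}\,\|h\|_{\ell^2(\lambda)}$, and then only property (v) is needed to conclude $\|h\|_{\ell^2(\lambda)}\lesssim\varepsilon$. This is a genuine simplification: property (iv) of $\psi_\varepsilon$ plays no role in your version. The trade-off is that the paper's decomposition yields more explicit dependence on the parameters of $\psi_\varepsilon$, whereas your bound hides the $\delta$-dependence inside the constant $\|f_\delta\|_{A_M}$; for the bare density statement, however, your two-step choice (fix $\delta$, then send $\varepsilon\to 0$) is entirely sufficient. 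One small remark: your Hausdorff bound $d_{\mathscr{C}}(E,K)\le\delta+|I_\varepsilon|$ tacitly uses $|I_\varepsilon|<2\delta$ to guarantee that $E^\delta\setminus I_\varepsilon$ meets the boundary of $I_\varepsilon$ whenever $E$ meets $I_\varepsilon$; this is automatic once $\varepsilon$ is small, but is worth stating.
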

\begin{proof}
To see why $\mathscr{E}_a$ open, let $(f,E) \in \mathscr{E}_a$ and pick $\delta>0$ such that the arc $I_{2\delta}(a)$ centered at $a$ of length $4\delta$ does not meet $E$. Now if $d_{\lambda}\left( (f,E), (g,K) \right) \leq \delta/2$, then we in particular have that $d_{\mathscr{C}}(E,K) \leq \delta /2$, hence we can infer that $I_{\delta/2}(a) \cap K = \emptyset$. This shows that $\mathscr{E}_a$ is indeed open. \\

In order to verify that $\mathscr{E}_a$ is dense, it suffices to show that for any $(f,E) \in \mathcal{L}_\lambda$ and any $\delta>0$, there exists $(g,K) \in \mathscr{E}_a$ such that 
\[
d_{\lambda} \left( (f,E), (g,K) \right) < \delta.
\]
By means of re-scaling $f$ and employing a simple argument involving convolution with a smooth approximate of the identity, we may actually assume that $f \in C^\infty(\T)$ with $0\leq f(\zeta) \leq 2-\delta$ for all $\zeta \in \T$. We now invoke \thref{LEM:KEYKÖR}. There exists an integer $N(\lambda)>0$, such that for any $N> 100 N(\lambda)$, the follow statement holds: for any $0<\varepsilon<1$, there exists $\psi_{\varepsilon} \in C^\infty(\T)$ such that 
\begin{enumerate}
    \item[(i)] $0\leq \psi_\varepsilon\leq 1+\varepsilon$ on $\T$,
    \item[(ii)] $\psi_{\varepsilon}=0$ in a neighborhood $J_\varepsilon(1)$ of $\zeta=1$, whose length tends to zero as $\varepsilon \to 0$,
    \item[(iii)] $\int_{\T} \psi_\varepsilon dm = 1$,
    \item[(iv)]$\sup_{n\neq 0} \abs{\widehat{\psi_\varepsilon} (n)} \leq \min \left( \varepsilon, c(N) \varepsilon^{1-N} \abs{n}^{-N} \right)$,
    \item[(v)]
    $\sum_{n\neq 0} \abs{\widehat{\psi}_\varepsilon (n)}^2 \lambda_{|n|} \leq \varepsilon^2$.
\end{enumerate}
Now set $f_\varepsilon (\zeta) = f(\zeta) \cdot \psi_\varepsilon(\zeta \conj{a})$ and $E_\varepsilon = E \setminus J_{\varepsilon}(a)$. It easily follows that the pair $(f_\varepsilon, E_\varepsilon)$ belongs to $\mathscr{L}_{\lambda}$ and we have $d_{\mathscr{C}}(E,E_\varepsilon)\to 0$ as $\varepsilon \to 0+$. Therefore, it only remains to show that 
\[
\lim_{\varepsilon \to 0+} \sum_{n} \abs{\widehat{f_\varepsilon}(n)-\widehat{f}(n)}^2 \lambda_{|n|} = 0.
\]
Since all functions involved are real-valued, we only need to estimate the sum for $n\geq 0$. First we write
\[
\widehat{f_\varepsilon}(n)-\widehat{f}(n) = \sum_{m\neq n} \widehat{f}(m) \widehat{\psi_\varepsilon}(n-m) = \left(\sum_{|m|\leq n/2} + \sum_{\substack{|m|>n/2 \\ m\neq n}} \right) \widehat{f}(m) \widehat{\psi_\varepsilon}(n-m) =: \Sigma_1 + \Sigma_2.
\]
In order to make the second sum small, we use property $(iv)$ and the smoothness of $f$. Hence for any desirable value of $A>0$, there exists $C(A)>0$, such that
\[
\abs{\sum_{\substack{|m|>n/2 \\ m\neq n}} \widehat{f}(m) \widehat{\psi_\varepsilon}(n-m)} \leq \varepsilon \sum_{|m|>n/2} \abs{\widehat{f}(m)} \leq   \frac{C(A) \varepsilon}{(1+n)^{A}}.
\]
This in conjunction with the property $(a.)$ of $(\lambda_n)_n$ in \thref{LEM:DOUBLE} gives
\begin{equation}\label{EQ:2sum}
\sum_{n\geq 0} \abs{\sum_{\substack{|m|>n/2 \\ m\neq n}} \widehat{f}(m) \widehat{\psi_\varepsilon}(n-m)}^2 \lambda_{n} \leq C(A)^2 \varepsilon^2 \sum_{n\geq 0} \frac{\lambda_n}{(1+n)^{2A}} \leq C'(A) \varepsilon^2.
\end{equation}
It remains only to estimate the Fourier coefficients of $\Sigma_1$, which we shall split into two further pars.
Let $M(\varepsilon) \to +\infty$ as $\varepsilon \to 0+$ be a parameter to be specified later. For $0\leq n < M(\varepsilon)$, we make use of the property $(v)$ of $\psi_\varepsilon$, which gives
\begin{multline}\label{EQ:Meps1}
\sum_{0\leq n < M(\varepsilon)} \abs{\sum_{|m|\leq n/2} \widehat{f}(m) \widehat{\psi_\varepsilon}(n-m)}^2 \lambda_{n} \\ \leq 
\sum_{0\leq n < M(\varepsilon)} \sum_{|m|\leq n/2} \abs{\widehat{f}(m)}^2 \sum_{|m|\leq n/2} \abs{ \widehat{\psi_\varepsilon}(n-m)}^2 \lambda_{n} 
\\ \asymp \sum_{0\leq n < M(\varepsilon)} \sum_{|m|\leq n/2} \abs{\widehat{f}(m)}^2 \sum_{|m|\leq n/2} \abs{ \widehat{\psi_\varepsilon}(n-m)}^2 \lambda_{|n-m|}
\\ \leq \varepsilon^2 \sum_{0\leq n < M(\varepsilon)} \sum_{|m|\leq n/2} \abs{\widehat{f}(m)}^2  \leq
\varepsilon^2 M(\varepsilon) \norm{f}^2_{L^2}.
\end{multline}
In the first step we used Cauchy-Schwartz inequality for the inner sum, while in the second step we used the assumption $\lambda_{n} \asymp \lambda_{|n-m|}$ whenever $|m|\leq n/2$. Moving forward, it remains only to estimate the sum for $n\geq M(\varepsilon)$. This time, we instead make use of property $(iv)$, in the following way:
\begin{multline*}
\abs{ \sum_{|m|\leq n/2} \widehat{f}(m) \widehat{\psi_\varepsilon}(n-m)}^2 \leq C(N)^2 \varepsilon^{2-2N} \left( \sum_{|m|\leq n/2} \abs{\widehat{f}(m)}\abs{n-m}^{-N} \right)^2 \\
\lesssim C(N)^2 \varepsilon^{2-2N} n^{-2N} \norm{f}^2_{\ell^1}.
\end{multline*}
With this estimate at hand, we get 
\begin{equation}\label{EQ:Loptial}
\sum_{n\geq M(\varepsilon)} \abs{ \sum_{|m|\leq n/2} \widehat{f}(m) \widehat{\psi_\varepsilon}(n-m)}^2 \lambda_{n} \lesssim C(N)^2\norm{f}^2_{\ell^1} \varepsilon^{2-2N} \sum_{n\geq M(\varepsilon)} \frac{\lambda_n}{n^{2N}}.
\end{equation}
Utilizing part $(a.)$ of \thref{LEM:DOUBLE}, we obtain 
\[
\varepsilon^{2-2N} \sum_{n\geq M(\varepsilon)} \frac{\lambda_n}{n^{2N}} \lesssim \varepsilon^{2-2N} \frac{\lambda_{M(\varepsilon)}}{M(\varepsilon)^{N(\lambda)}} \sum_{n\geq M(\varepsilon)} \frac{1}{n^{2N-N(\lambda)}} \lesssim \varepsilon^{2-2N} \frac{\lambda_{M(\varepsilon)}}{M(\varepsilon)^{2N-1}}.
\]
Now choosing $M(\varepsilon) \asymp \varepsilon^{-3/2}$ and returning back to \eqref{EQ:Loptial}, we arrive at
\[
\sum_{n\geq M(\varepsilon)} \abs{ \sum_{|m|\leq n/2} \widehat{f}(m) \widehat{\psi_\varepsilon}(n-m)}^2 \lambda_{n} \lesssim \lambda_{\varepsilon^{-3/2} } \left( \varepsilon^{3/2} \right)^{(2N+1)/3} \to 0, \qquad \varepsilon\to 0+
\]
since $N> 100N(\lambda)$ in view of $(a.)$ of \thref{LEM:DOUBLE}. Now since \eqref{EQ:2sum} and \eqref{EQ:Meps1} can also be made arbitrary small, we conclude
\[
\lim_{\varepsilon \to 0+} \sum_{n\geq 0} \abs{\widehat{f_\varepsilon}(n)-\widehat{f}(n)}^2 \lambda_{|n|} =0.
\]
This proves that $\mathscr{E}_a$ is dense in the metric space $(\mathcal{L}_\lambda, d_\lambda)$.
\end{proof}

With this at hand, we easily complete the proof \thref{THM:BAIRETOP}.

\begin{proof}[Proof of \thref{THM:BAIRETOP}] Pick a countable dense subset $\{a_j\}_j \subset \T$ and note that according to \thref{PROP:MAINPROPl2w}, and the Baire category theorem, the set $\mathscr{E} = \cap_j \mathscr{E}_{a_j}$ is dense in $\mathcal{L}_{\lambda}$, hence non-empty. Now if $(f,E) \in \mathscr{E}$, then $E \cap \{a_j\}_j = \emptyset$ by construction, hence $E$ cannot have any interior point. Since $\supp{f} \subseteq E$, we conclude that $f$ satisfies the required properties.
\end{proof}  
\subsection{Functions with Sparse Fourier support} 

Here we briefly explain why techniques involving sparse Fourier support cannot simply prove \thref{THM:BADSUPP}. To this end, given a subset of integers $\Lambda$, we denote by $C_{\Lambda}(\T)$ the closed subspace of continuous functions $f$ on $\T$ with the property that $\supp{\widehat{f}}\subseteq \Lambda$. A set $\Lambda$ is said to be a \emph{Sidon set} if there exists $C(\Lambda)>0$, such that 
\[
\sum_{n} \abs{\widehat{f}(n)} \leq C(\Lambda) \sup_{\zeta \in \T} \abs{f(\zeta)}, \qquad \forall f\in C_{\Lambda}(\T).
\]
Examples of Sidon sets include lacunary sequences $\Lambda = \{N_k\}_k$ with $\inf_k N_{k+1}/N_k >1$, which is a classical result due to A. Zygmund. For instance, see Ch. 5 in \cite{zygmundtrigseries}, or Lemma 1.4 in Ch. V of \cite{katznelson2004introduction}. A remarkable arithmetic characterization of Sidon sets was given by G. Pisier in \cite{pisier1983arithmetic}. Given an integer $n$, we denote by $R(n,\Lambda)$ the numbers of ways to write $n= \sum_j \varepsilon_j \lambda_j $ as finite linear combinations of $\lambda_j \in \Lambda$ and with $\varepsilon_j \in \{-1,0,1\}$ for all $j$. G. Pisier proved that $\Lambda$ is a Sidon set if and only if there exists a number $0<\gamma<1$, such that for any finite subset $\Gamma \subset \Lambda$, we have 
\[
\sup_{n \in \mathbb{Z}} R(n, \Gamma) \leq 3^{\gamma \abs{\Gamma}},
\]
where $\abs{\Gamma}$ denotes the cardinality of the subset $\Gamma$. In other words, representations of integers involving finite sub-collections of $\Lambda$ must be exponentially sparser. This clarifies why methods involving sparse Fourier support, such as Riesz products, cannot readily be used to produce functions appearing in \thref{THM:BADSUPP}. For further connections to Riesz products, see also refer the reader to the work of J. Bourgain in \cite{bourgain1985sidon}. See also Kronecker's Theorem on independent sets in Ch. VI of \cite{katznelson2004introduction}.

\section{Indispensability of regularity hypothesis}\label{SEC:3}
Here, we give a short proof of \thref{THM:(ii)}. In fact, we shall derive it from the following stronger statement.
\begin{thm}\thlabel{THM:iistrong} Let $\Phi$ be a positive increasing function on $(0,\infty)$. There exists increasing positive numbers $(\lambda_n)_n$ with $\lambda_n \uparrow + \infty$, such that 
\[
\sum_n \Phi( 1/\lambda_n) = +\infty,
\]
but if $S$ is a non-zero distribution on $\T$ with $(\widehat{S}(n))_n \in \ell^2(\lambda)$, then $\supp{S}= \T$.
\end{thm}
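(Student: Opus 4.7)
The plan is a blockwise construction of $(\lambda_n)$ that separates two competing demands: very slow average growth, so that $\sum_n \Phi(1/\lambda_n) = +\infty$, and sharp local spikes, so that the constraint $(\widehat{S}(n))_n \in \ell^2(\lambda)$ is rigid enough to force full support. I would first observe that since $\lambda_n \uparrow +\infty$, any such $S$ satisfies $|\widehat{S}(n)|^2 \leq \norm{S}_{\ell^2(\lambda)}^2/\lambda_n$, so in particular $\widehat{S}(n)\to 0$ and $S$ is automatically a pseudofunction. The task therefore reduces to arranging $\lambda$ so that no non-zero pseudofunction with proper support lies in $\ell^2(\lambda)$.

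Using separability, I would enumerate a countable dense family $\{I_k\}_k$ of open arcs (for instance, those with rational endpoints). A distribution on $\T$ has $\supp{S}\neq \T$ if and only if $\supp{S}\subset \T\setminus I_k$ for some $k$, so it suffices to handle these countably many cases. For each fixed $k$, the plan is to invoke the lemma of K\"orner and Meyer advertised in the excerpt to extract a finite block $B_k \subset \mathbb{Z}$ together with a trigonometric polynomial $P_k$ whose spectrum lies in $B_k$, quantitatively detecting support in $\T\setminus I_k$ in the following sense: for every non-zero pseudofunction $S$ with $\supp{S}\subset \T\setminus I_k$, the pairing $\langle P_k, S\rangle$ is non-trivial with a lower bound in terms of some norm of $S$.

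With this building block, I would arrange the blocks $B_1, B_2, \dots$ to be pairwise disjoint and located at very widely spaced integers. On each $B_k$ I would inflate $\lambda_n$ to a spike so high that the $\ell^2(\lambda)$-constraint forces $\widehat{S}|_{B_k}$ to be too small for the detector $P_k$ to produce a non-trivial pairing, contradicting the lower bound. Off the sparse union $\bigcup_k B_k$ I would let $\lambda_n$ grow as slowly as desired---say a staircase with long flat stretches and tiny jumps---chosen only so that the off-block contribution to $\sum_n \Phi(1/\lambda_n)$ is already $+\infty$; since $\bigcup_k B_k$ is sparse, this off-block portion dominates the full sum regardless of $\Phi$. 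Monotonicity is preserved by positioning each spike at the tail of its block and letting the off-block staircase resume just above the previous spike value.

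The main obstacle I foresee is coordinating the three requirements simultaneously: the K\"orner--Meyer detector on each $B_k$ must be sharp enough to rule out \emph{all} non-zero pseudofunctions supported in $\T\setminus I_k$, not merely nicely behaved ones; the blocks must be sparse enough for the off-block slow growth to overwhelm $\Phi$; and the overall sequence must remain monotone. Once \thref{THM:iistrong} is established, \thref{THM:(ii)} follows by specialising to $\Phi(t)=t$, whereby $\sum_n \Phi(1/\lambda_n) = \sum_n 1/\lambda_n = +\infty$ reproduces hypothesis (i) of that theorem.
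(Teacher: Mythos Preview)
Your scaffold—long flat stretches for $\sum_n\Phi(1/\lambda_n)=+\infty$, sharp spikes to rigidify the $\ell^2(\lambda)$ constraint—is right in spirit, but the detection mechanism cannot work as stated. There is no finite-block polynomial $P_k$ with spectrum in $B_k$ for which $|\langle P_k,S\rangle|$ admits a nontrivial lower bound over \emph{all} nonzero pseudofunctions supported in $\T\setminus I_k$: that space is infinite-dimensional, and a single linear functional has nontrivial kernel in it. More generally, forcing $\widehat{S}|_{B_k}$ to be small imposes only finitely many linear constraints, which cannot exclude all nonzero distributions supported in a fixed proper arc. This is not what the K\"orner--Meyer lemma provides.

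That lemma is a \emph{threshold} statement: given $N,\gamma,\delta$ there is $\varepsilon$ such that $\sum_{|n|\leq N}|\widehat{S}(n)|^2\geq\gamma$ together with $\sup_{|n|>N}|\widehat{S}(n)|\leq\varepsilon$ force $\supp{S}$ to be $\delta$-dense in $\T$. The paper uses it exactly so: choose $N_1<N_2<\cdots$ and $\varepsilon_k\downarrow 0$ (via the lemma, with $\gamma=\delta=2^{-k}$), set $\lambda_{N_k}=2^{2k}/\varepsilon_k^2$, and interpolate $1/\lambda_n$ linearly between nodes. For any nonzero $S\in\ell^2(\lambda)$ the tail bound $\sup_{|n|>N_{k+1}}|\widehat{S}(n)|\leq\varepsilon_{k+1}$ follows from monotonicity of $\lambda$, while the head bound $\sum_{|n|\leq N_{k+1}}|\widehat{S}(n)|^2\geq 2^{-k}$ holds for large $k$ simply because $S\in\ell^2$ is nonzero; hence $\supp{S}$ is $2^{-k}$-dense for all large $k$, so $\supp{S}=\T$. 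Divergence of $\sum_n\Phi(1/\lambda_n)$ is arranged just as you suggest, by taking $N_{k+1}-N_k$ large enough that $(N_{k+1}-N_k)\,\Phi(\varepsilon_k^2\,2^{-2k-1})\geq1$. The point is that the ``spike'' is not a finite detecting block but a threshold above which \emph{all} Fourier coefficients are controlled; the lemma needs global tail decay, not localisation to a block.
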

The proof of \thref{THM:iistrong} rests on a lemma from the work of T. W. K\"orner, whose surprisingly short and simple proof is attributed to Y. Meyer.
\begin{lemma}[K\"orner-Meyer, Lemma 2.1 \cite{korner1977theorem2}] Given $N\geq 1$ and $\gamma, \delta >0$, there exists $\varepsilon=\varepsilon(N,\gamma,\delta)>0$, such that the following statement holds: whenever $S$ is a distribution on $\T$ with
\[
(i) \, \sum_{|n|\leq N} \abs{\widehat{S}(n)}^2 \geq \gamma, \qquad (ii) \, \sup_{|n|>N} \abs{\widehat{S}(n)} \leq \varepsilon,
\]
then $\sup_{\zeta \in \T} \dist{\zeta}{\supp{S}}\leq \delta$.
\end{lemma}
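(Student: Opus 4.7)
The plan is to argue by contrapositive: suppose $\sup_{\zeta\in\T}\dist{\zeta}{\supp{S}}>\delta$, so there exists an open arc $J\subset\T$ of length strictly greater than $2\delta$ which is disjoint from $\supp{S}$. The task then reduces to pinning down an explicit threshold $\varepsilon(N,\gamma,\delta)$ that forces conditions $(i)$ and $(ii)$ into inconsistency under the assumption that such a $J$ exists.

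The central device is a bounded moment-matching operator $\Psi\colon\C^{2N+1}\to C_c^\infty(J)$ satisfying $\widehat{\Psi(b)}(n)=b_n$ for all $|n|\leq N$, together with a Wiener-type estimate
\[
\sum_{n\in\mathbb{Z}}\abs{\widehat{\Psi(b)}(n)}\leq C(N,\delta)\left(\sum_{|n|\leq N}\abs{b_n}^2\right)^{1/2}.
\]
Surjectivity of the moment map $T\phi:=(\widehat\phi(n))_{|n|\leq N}$ on $C_c^\infty(J)$ is a duality argument: any vector $a\in\C^{2N+1}$ in the orthogonal complement of the range would correspond to a trigonometric polynomial $P(\theta)=\sum_{|n|\leq N}a_n e^{in\theta}$ of degree $\leq N$ satisfying $\int_J \overline{\phi}\, P\, dm=0$ for every $\phi\in C_c^\infty(J)$, forcing $P\equiv 0$ on the open arc $J$; since such a nonzero polynomial has at most $2N$ zeros on $\T$, necessarily $a=0$. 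A bounded right inverse $\Psi$ then exists by elementary finite-dimensional linear algebra and takes values in the Wiener algebra $A(\T)$.

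Applying $\Psi$ to the vector $b_n:=\overline{\widehat S(-n)}$ for $|n|\leq N$ produces $\phi\in C_c^\infty(J)$. Since $\supp{\phi}\cap\supp{S}=\emptyset$, distribution theory forces $\langle S,\phi\rangle=0$. On the other hand, the Parseval-type expansion
\[
\langle S,\phi\rangle=\sum_{n\in\mathbb{Z}}\widehat S(n)\widehat\phi(-n),
\]
which converges absolutely since $\phi$ is smooth and the Fourier coefficients of a distribution on $\T$ have polynomial growth, splits cleanly at $|n|=N$: the low-frequency piece reduces by the choice of $b_n$ to $\sum_{|n|\leq N}\abs{\widehat S(n)}^2\geq \gamma$, while the high-frequency piece is controlled using $(ii)$ by $\varepsilon\|\widehat\phi\|_{\ell^1}\leq \varepsilon C(N,\delta)\big(\sum_{|n|\leq N}\abs{\widehat S(n)}^2\big)^{1/2}$. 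Balancing and dividing out a factor of $\big(\sum_{|n|\leq N}\abs{\widehat S(n)}^2\big)^{1/2}$ yields $\sqrt{\gamma}\leq \varepsilon C(N,\delta)$, so the choice $\varepsilon(N,\gamma,\delta):=\sqrt{\gamma}/(2C(N,\delta))$ closes the argument.

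The main obstacle is the effective realization of $\Psi$ and the constant $C(N,\delta)$. A concrete recipe is to fix $2N+1$ smooth bumps supported on disjoint sub-arcs of $J$ and invert the Vandermonde-type matrix formed by their low Fourier moments; the operator norm of this inverse then furnishes $C(N,\delta)$. The constant will generally blow up as $\delta\to 0$, reflecting the genuine difficulty of packing prescribed Fourier moments into a narrow arc with controlled Wiener norm, but only the existence of a finite $\varepsilon(N,\gamma,\delta)$ is required for the conclusion.
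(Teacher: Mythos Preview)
The paper does not actually prove this lemma; it merely cites it as Lemma~2.1 of K\"orner's paper \cite{korner1977theorem2}, remarking that the ``surprisingly short and simple proof is attributed to Y.~Meyer.'' There is thus no in-paper proof to compare against.

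Your argument is correct and is precisely in the spirit of Meyer's proof: once a gap arc $J$ of length $\geq 2\delta$ is found in the complement of $\supp{S}$, one tests $S$ against a smooth function $\phi$ supported in $J$ whose low Fourier coefficients are prescribed, and reads off a contradiction from $\langle S,\phi\rangle=0$. The surjectivity of the moment map via the zero-counting argument for trigonometric polynomials is the right mechanism, and the split of the Parseval pairing into low and high frequencies, with the low part yielding $\sum_{|n|\leq N}|\widehat S(n)|^2$ and the high part bounded by $\varepsilon\|\widehat\phi\|_{\ell^1}$, is exactly how the contradiction is obtained. One small point worth making explicit: the constant $C(N,\delta)$ is independent of the particular arc $J$ because by rotation invariance you may always take $J$ to be a fixed arc of length $2\delta$ centered at $1$, so the right inverse $\Psi$ and its norm depend only on $N$ and $\delta$.
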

With this lemma at hand, we can give a short proof of our principal observation in this section, which is a modification of Theorem 1.2 in \cite{korner1977theorem2}, adapted to our setting.

\begin{proof}[Proof of \thref{THM:iistrong}] Set $N_1=\varepsilon_1=1$, and suppose that positive integers $N_1 < N_2 < \dots < N_k$, and positive numbers $\varepsilon_1>\varepsilon_2 > \dots > \varepsilon_k>0$ have already been constructed. Now pick $N_{k+1}>N_k$, such that 
\[
\left(N_{k+1}-N_k \right) \Phi(\varepsilon^2_{k} 2^{-2k-1} )\geq 1.
\]
Invoking the K\"orner-Meyer Lemma, we can find a number $0<\varepsilon_{k+1}< \varepsilon_k$ such that the following statement holds: whenever $S$ is a distribution on $\T$ with 
\[
(i)_k \, \sum_{|n|\leq N_{k+1}} \abs{\widehat{S}(n)}^2 \geq 2^{-k}, \qquad (ii)_k \, \sup_{|n|>N_{k+1}} \abs{\widehat{S}(n)} \leq \varepsilon_{k+1},
\]
then $\sup_{\zeta \in \T} \dist{\zeta}{\supp{S}} \leq 2^{-k}$. 
Define the corresponding sequence of positive numbers $(\lambda_n)_n$ as follows: For each $k=0,1,2,\dots$ set $\lambda_{N_k} = 2^{2k}/\varepsilon^2_k$, and interpolate the intermediate values of $1/\lambda_n$ linear-wise:
\[
\frac{1}{\lambda_{n}} := \frac{\varepsilon^2_k}{2^{2k}} \frac{N_{k+1}-n}{N_{k+1}-N_k}  + \frac{\varepsilon^2_{k+1}}{2^{2(k+1)}}   \frac{n-N_{k}}{N_{k+1}-N_k}, \qquad N_k \leq n < N_{k+1}, \qquad k=0,1,2, \dots.
\]
Clearly, $(\lambda_n)_n$ is increasing with $\lambda_n \uparrow + \infty$. Furthermore, it follows from the monotonicity of $\Phi$ in conjunction with the constructions of $(N_k)_k$ that 
\[
\sum_{N_k\leq n \leq (N_{k+1}+N_k)/2} \Phi(1/\lambda_n) \geq \frac{(N_{k+1}-N_k)}{2}\Phi(\varepsilon^2_k 2^{-2k-1} ) \geq \frac{1}{2}, \qquad k=0,1,2 \dots,
\]
hence $\sum_n \Phi(1/\lambda_n) = + \infty$. Now let $S$ be an arbitrary non-zero distribution on $\T$ with 
\[
\sum_n \abs{\widehat{S}(n)}^2 \lambda_{|n|} < \infty.
\]
We claim that both $(i)_k$ and $(ii)_k$ must hold for all sufficiently large $k$. Indeed, the former statement follows from $S\in \ell^2$, while the second property follows from the crude estimate:
\[
\sup_{|n|>N_{k+1}} \abs{\widehat{S}(n)}^2  \leq \frac{1}{\lambda_{N_{k+1}}} \sum_{|n|>N_{k+1}}\abs{\widehat{S}(n)}^2 \lambda_{|n|} \leq \frac{1}{\lambda_{N_{k+1}}} \leq \varepsilon^2_{k+1},
\]
which holds for sufficient large $k$. Therefore, $\sup_{\zeta \in \T} \dist{\zeta}{\supp{S}} \leq 2^{-k}$ holds for all sufficiently large $k$, and we conclude that $\supp{S}=\T$.

\end{proof}

\section{Appendix}
Here we outline simple proofs of \thref{THM:l2wcont} and \thref{COR:l2wlac}. 

\begin{proof}[Proof of \thref{THM:l2wcont}] Pick a real-valued trigonometric polynomial $T_0$ satisfying the following properties:
\begin{equation}
(i) \, \norm{T_0}_{L^\infty} \leq 1, \qquad (ii) \, \norm{T_0}_{L^2}\geq 1/2, 
\end{equation}
and let $N_0>0$ denote its degree. For instance, such polynomials can be found by taking $h(\zeta)= e^{i\Re(\zeta)}$ and passing to appropriate Fej\'er means. Now chose $n_0> 2N_0$ and take a subsequence $(n_j)_j$ with the properties that
\[ 
\sum_j \frac{1}{\sqrt{\lambda_{n_j}}} <\infty, \qquad n_{j+1} - n_j \geq 2N_0, \qquad j=0,1,2,\dots.
\]
Set $T_j(\zeta) = \zeta^{n_j}T_0(\zeta)$ for $j=0,1,2,\dots$, and consider a function $f$ of the form
\[
f(\zeta) = \sum_j \frac{1}{\sqrt{\lambda_{n_j}}} T_{j}(\zeta) , \qquad \zeta \in \T.
\]
Since $(1/\sqrt{\lambda_{n_j}})_j$ is summable, the Weierstrass M-test ensures that $f$ is continuous on $\T$. Using the mutually disjoint Fourier support of $(T_j)_j$, we also have
\[
\sum_n \abs{\widehat{f}(n)}^2 \lambda_{|n|} = \sum_j \frac{1}{\lambda_{n_j}} \sum_{n_j\leq |n| \leq n_j + N_0} \abs{\widehat{T_j}(n)}^2 \lambda_{|n|} \geq \sum_j \sum_n \abs{\widehat{T_j}(n)}^2 \geq \sum_j \frac{1}{4}=\infty,
\]
where we in the penultimate step also utilized the monotonicity of $\lambda_n$. 
\end{proof}

The proof of \thref{COR:l2wlac} can easily be deduced from \thref{THM:l2wcont} by means of duality. Indeed, if the statements is false, then its dual space $\ell^2(1/w)$ is contained in the space of continuous functions, contradicting \thref{THM:l2wcont}.

\bibliographystyle{siam}
\bibliography{mybib}

\Addresses

\end{document}